\numberwithin{equation}{section}
\newcommand{\U}{\mathcal U}
\newcommand{\V}{\mathcal V}
\newcommand{\I}{\mathbb I}
\newcommand{\e}{\varepsilon}
\newcommand{\diam}{\mathrm{diam}}
\newcommand{\mesh}{\mathrm{mesh}}
\newcommand{\edim}{\mbox{\rm e-dim}}
\newcommand{\erdim}{\mbox{\rm e-rdim}}
\newcommand{\rdim}{\mbox{\rm rdim}}
\newtheorem{thm}{Theorem}[section]
\newtheorem{pro}[thm]{Proposition}
\newtheorem{lem}[thm]{Lemma}
\begin{document}


\title[Parametric Bing and Krasinkiewicz maps: revisited]
{Parametric Bing and Krasinkiewicz maps: revisited}

\author{Vesko  Valov}
\address{Department of Computer Science and Mathematics, Nipissing University,
100 College Drive, P.O. Box 5002, North Bay, ON, P1B 8L7, Canada}
\email{veskov@nipissingu.ca}
\thanks{The author was partially supported by NSERC Grant 261914-08.}

\keywords{Bing maps, Krasinkiewicz maps, continua, metric spaces,
absolute neighborhood retracts, extensional dimension}

\subjclass{Primary 54F15, 54F45; Secondary 54E40}


\begin{abstract}
Let $M$ be a complete metric $ANR$-space such that for any metric
compactum $K$ the function space $C(K,M)$ contains a dense set of
Bing (resp., Krasinkiewicz) maps. It is shown that $M$ has the
following property: If $f\colon X\to Y$ is a perfect surjection
between metric spaces, then $C(X,M)$ with the source limitation
topology contains a dense $G_\delta$-subset of maps $g$ such that
all restrictions $g|f^{-1}(y)$, $y\in Y$, are Bing (resp.,
Krasinkiewicz) maps. We apply the above result to establish some
mapping theorems for extensional dimension.
\end{abstract}

\maketitle

\markboth{}{Bing and Krasinkiewicz maps}



\section{Introduction}
All spaces in the paper are assumed to be metrizable and all maps
continuous. By $C(X,M)$ we denote all maps from $X$ into $M$.
Unless stated otherwise, all function spaces are endowed with the
source limitation topology.

In this paper we extend some results from \cite{mv} and \cite{vv}
concerning parametric Bing and Krasinkiewicz maps. A space $M$ is
said to be a {\it Krasinkiewicz space} \cite{mv} if for any
compactum $X$ the function space $C(X,M)$ contains a dense subset of
Krasinkiewicz maps. Here, a map $g\colon X\to M$, where $X$ is
compact, is said to be Krasinkiewicz \cite{ll} if every continuum in
$X$ is either contained in a fiber of $g$ or contains a component of
a fiber of $g$. The class of Krasinkiewicz spaces contains all
Euclidean manifolds and manifolds modeled on Menger or N\"{o}beling
spaces, all polyhedra (not necessarily compact), as well as all
cones with compact bases (see \cite{kr1}, \cite{ll}, \cite{m},
\cite{m1}, \cite{mv}).

Our result concerning parametric Krasinkiewicz maps is as follows:

\begin{thm}
Let $M$ be a complete Krasinkiewicz $ANR$-space and $f\colon X\to Y$
a perfect surjection between metric spaces. Then $C(X,M)$ contains a
dense $G_\delta$-set of maps $g$ such that all restrictions
$g|f^{-1}(y)$, $y\in Y$, are Krasinkiewicz maps.
\end{thm}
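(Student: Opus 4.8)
The plan is to exhibit the required collection as a dense $G_\delta$-subset of $C(X,M)$ equipped with the source limitation topology. Since $M$ is a complete $ANR$ and $X$ is metrizable, hence paracompact, this function space is a Baire space; so it suffices to produce countably many open dense subsets whose intersection consists of maps that are Krasinkiewicz on every fiber of $f$. First I would set up the hyperspace machinery: let $\mathcal{C}(X)$ denote the space of subcontinua of $X$ with the Vietoris topology, and let $\mathcal{F}=\{C\in\mathcal{C}(X):f(C)\text{ is a singleton}\}$ be the subcontinua lying in a single fiber of $f$. Because $f$ is perfect, hence closed with compact fibers, $\mathcal{F}$ is closed in $\mathcal{C}(X)$, and the behaviour of any $g\in C(X,M)$ on the fibers of $f$ is governed entirely by how $g$ acts on the members of $\mathcal{F}$.

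Next I would reformulate the fiberwise Krasinkiewicz condition in a semicontinuous, countable form. For $g\in C(X,M)$, the restriction $g|f^{-1}(y)$ is Krasinkiewicz for every $y\in Y$ precisely when every $C\in\mathcal{F}$ is either collapsed by $g$ to a point or contains a whole component of some fiber of $g|f^{-1}(f(C))$. Using a countable base of $M$ together with a compatible complete metric on $X$, I would express the negation of this condition — that for some $C\in\mathcal{F}$ the image $g(C)$ is nondegenerate while $C$ contains no whole component of any fiber of $g|f^{-1}(f(C))$ — as a countable union of closed conditions indexed by rational radii and basic open sets. The compactness of the fibers of $f$ and the closedness of $\mathcal{F}$ supply the upper semicontinuity needed to close these conditions, so the target set $\mathcal{K}$ is a $G_\delta$, say $\mathcal{K}=\bigcap_n\mathcal{O}_n$ with each $\mathcal{O}_n$ open.

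The heart of the argument is the density of each $\mathcal{O}_n$. Given $g_0\in C(X,M)$ and a positive continuous function $\varepsilon$ on $X$, I would produce $g$ that is $\varepsilon$-close to $g_0$ and lies in $\mathcal{O}_n$ by a local-to-global perturbation. Fix $y_0\in Y$; on the compact fiber $f^{-1}(y_0)$ the hypothesis that $M$ is a Krasinkiewicz space provides a Krasinkiewicz map arbitrarily close to $g_0|f^{-1}(y_0)$. Using that $M$ is an $ANR$, I would extend such a local perturbation to a neighbourhood of the form $f^{-1}(U)$, keeping it $\varepsilon$-close to $g_0$, so that the restriction to each nearby fiber satisfies the level-$n$ requirement. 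A locally finite open cover $\{U_\alpha\}$ of the metric space $Y$ with subordinated partition of unity would then be used to glue these local perturbations into a single $g$ through the retraction and convex-combination structure of the $ANR$.

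I expect the gluing step to be the main obstacle. The Krasinkiewicz condition is a statement about \emph{all} subcontinua, and a convex combination of two fiberwise-Krasinkiewicz maps need not remain Krasinkiewicz on a fiber, since a subcontinuum of $f^{-1}(y)$ may straddle the region where two local perturbations are blended. To control this I would arrange the level-$n$ requirement so that it is robust under small perturbations — which is exactly what it means for $\mathcal{O}_n$ to be open — and carry out the blending so that over each fiber only finitely many patches interact and the controlling component structure is preserved within the level-$n$ tolerance. Verifying that the glued map lands in $\mathcal{O}_n$, uniformly over all fibers $f^{-1}(y)$, is the technical crux; once density of every $\mathcal{O}_n$ is established, the Baire property of $C(X,M)$ yields the dense $G_\delta$-set $\mathcal{K}=\bigcap_n\mathcal{O}_n$, completing the proof.
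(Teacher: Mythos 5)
Your overall architecture --- Baire category in the source limitation topology, writing the target set as a countable intersection of open sets indexed by approximation parameters, and reducing everything to the density of each open set --- matches the paper's. The genuine gap is in the density step, and you have in fact put your finger on it yourself: gluing local fiberwise perturbations over a locally finite cover of $Y$ by a partition of unity does not work, because the Krasinkiewicz condition (even its quantified level-$(m,n)$ version) is not preserved under convex combinations, and over a fiber $f^{-1}(y)$ where several patches overlap you have no control on the component structure of the blended map. Saying that you would ``carry out the blending so that the controlling component structure is preserved within the level-$n$ tolerance'' is not an argument; this is exactly the obstruction that forced the earlier versions of this theorem (in \cite{mv}, \cite{vv}) to assume $Y$ strongly countable dimensional or a $C$-space, where one has combinatorial control on the overlaps of the cover.

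The paper's way around this is entirely different and is the missing idea: it avoids gluing over $Y$ altogether by factorization through simplicial complexes. One first approximates $g$ by a simplicially factorizable map, then (using the factorization theorems of \cite{bv}) produces a commutative square $\beta\circ f=p\circ\alpha$ in which $p$ restricts to a perfect PL-map $p_N\colon N\to L$ of simplicial complexes and $\alpha$ is a $\U_1$-map with $\mesh\U_1<1/m$. Density of $\mathcal K_{p_N}(L)$ in $C(N,M)$ is then proved by induction on the skeleta of $L$, and the inductive step over each simplex is a \emph{relative} approximation lemma over the cube (Lemma 2.1): one can make the map Krasinkiewicz over all of $\I^k$ while keeping it \emph{fixed} on $\pi^{-1}(\mathbb S^{k-1})$. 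This rel-boundary statement (which uses the special complete metric on $M$ with the $\alpha/8$ extension property, together with a Baire-category argument inside the compact setting via the open sets $T(m,n)$) is what replaces your partition-of-unity gluing; the final map $h=\varphi_1\circ\alpha$ is then checked to lie in $\mathcal K_f(m,n,Y)$ because the fibers of $\alpha$ have diameter $<1/m$, so components pull back with controlled error. Without some substitute for this factorization-plus-skeleton-induction mechanism, your density argument does not go through for general metric $Y$.
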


Theorem 1.1  was established in \cite{mv} in the case $Y$ is
strongly countable dimensional or $M$ is a closed convex subset of a
Banach space and $Y$ a $C$-space.

The second part of the paper is devoted to Bing maps. Recall that a
map $f$ between compact spaces is said to be a {\em Bing map}
\cite{lev} provided all fibers of $f$ are Bing spaces. Here, a
compactum is a {\em Bing space} if each of its subcontinua are
indecomposable. Following Krasinkiewicz \cite{kr},  we say that a
space $M$ is a {\em free space} if for any compactum $X$ the
function space $C(X,M)$ contains a dense subset consisting of Bing
maps. The class of free spaces is quite large, it contains all
$n$-dimensional manifolds ($n\geq 1$) \cite{kr}, the unit interval
\cite{lev}, all locally finite polyhedrons \cite{st}, all manifolds
modeled on the Menger cube $M_{2n+1}^n$ or the N\"{o}beling space
$N_{2n+1}^n$ \cite{st}, as well as all $1$-dimensional locally
connected continua \cite{st}.

Our second result is the following theorem concerning parametric
Bing maps.

\begin{thm}
Let $f\colon X\to Y$ be a perfect surjection between metric spaces.
Then, for every complete $ANR$ free space $M$ the function space
$C(X,M)$ contains a dense $G_\delta$-set of maps $g$ such that all
restrictions $g|f^{-1}(y)$, $y\in Y$, are Bing maps.
\end{thm}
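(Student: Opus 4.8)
The plan is to run a Baire-category argument in $C(X,M)$ equipped with the source limitation topology. Since $M$ is a complete ANR, this function space is a Baire space, so it suffices to exhibit the set $\mathcal{G}$ of those $g$ for which every restriction $g|f^{-1}(y)$ is a Bing map as a dense $G_\delta$-subset. The first step is to unwind what membership in $\mathcal{G}$ means: the fibers of $g|f^{-1}(y)$ are the sets $g^{-1}(m)\cap f^{-1}(y)$, so $g\in\mathcal{G}$ precisely when each such compactum is hereditarily indecomposable. I would encode hereditary indecomposability through a well-known partition criterion (in the spirit of Krasinkiewicz): a compactum $Z$ is hereditarily indecomposable iff for every two pairs $(A_1,B_1),(A_2,B_2)$ of disjoint closed subsets of $Z$ there are disjoint closed partitions $L_i$ separating $A_i$ from $B_i$, a condition conveniently expressed by an auxiliary map $Z\to I^2$ that avoids the value $(1/2,1/2)$.

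Next I would organize these conditions into a countable family. As $X$ is metric it carries a $\sigma$-discrete base $\mathcal{B}=\bigcup_n\mathcal{B}_n$, and I would index by $n$ a sequence of sets $\mathcal{G}_n\subseteq C(X,M)$, where $\mathcal{G}_n$ demands the above partition property (for the pairs of closed sets coming from the $n$-th level of the base) on every fiber $f^{-1}(y)$ inside every fiber of $g$. The discreteness within each level $\mathcal{B}_n$ is what makes this legitimate even when $X$ is non-separable, since members of a discrete family can be handled simultaneously, and it yields $\mathcal{G}=\bigcap_n\mathcal{G}_n$ as a genuine countable intersection.

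I would then check that each $\mathcal{G}_n$ is open, and here perfectness of $f$ is essential. The fibers $f^{-1}(y)$ are compact, and the existence of disjoint separating partitions---read off from an $I^2$-valued auxiliary map missing $(1/2,1/2)$---is stable under perturbations of $g$ that are small with respect to a positive continuous control function; compactness of the fibers is exactly what upgrades this pointwise-in-$y$ stability to an honest source-limitation neighborhood of $g$ contained in $\mathcal{G}_n$.

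The hard part will be density of each $\mathcal{G}_n$, and this is where the free-space hypothesis on $M$ is used. Given $g$ and a control function $\alpha$, I would localize: cover $Y$ by a locally finite family with $\sigma$-compact traces in $X$, approximate $g$ on each relevant compact piece by a Bing map---available because $M$ is a free space---and splice the local approximations (using a subordinate partition of unity and the amalgamation of nearby local maps afforded by an ANR) so as to remain within the $\alpha$-neighborhood of $g$. The principal obstacle is performing this splicing while controlling all fibers at once: $Y$ may be infinite-dimensional, so $f$ cannot be trivialized globally, and gluing Bing maps need not produce a Bing map. The resolution, and the reason the scheme succeeds, is that at each stage one need only realize the single partition condition defining $\mathcal{G}_n$ rather than full indecomposability; the latter is recovered only in the intersection, and Baire category then delivers a single generic $g$ all of whose fiber-restrictions are Bing maps. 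This parallels the argument establishing Theorem 1.1 in the Krasinkiewicz case.
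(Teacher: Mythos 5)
Your overall frame---write the fiberwise-Bing maps as a countable intersection of open sets and run Baire category in the source limitation topology---matches the paper, and your openness argument (compactness of the fibers $f^{-1}(y)$ upgrading pointwise stability to a neighborhood in $C(X,M)$) is essentially the paper's Lemma 3.2. But the density step, which is the heart of the matter, has a genuine gap. Your plan is to cover $Y$ by a locally finite family, approximate $g$ over each piece by a Bing map using the free-space property, and splice with a partition of unity. This fails at two points. First, the free-space hypothesis only gives dense Bing maps in $C(K,M)$ for $K$ \emph{compact}; the preimages $f^{-1}(V)$ of members of a cover of $Y$ are not compact (nor $\sigma$-compact) unless $Y$ is locally compact, so there is nothing to apply the hypothesis to. Second, and more seriously, there is no partition-of-unity amalgamation in an ANR that preserves the fiber conditions on overlaps: two independent local approximations disagree on $f^{-1}(V\cap V')$, and any interpolation between them can destroy even the single scale-$1/m$ condition you are trying to realize. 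Your stated resolution---that one only needs one condition of $\mathcal{G}_n$ at a time---does not address this: the paper also only realizes one open condition $\mathcal B_f(m,Y)$ per stage, and still the entire technical apparatus is needed to do so.

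What the paper actually does for density is quite different. Using that $M$ is an ANR, it approximates $g$ by a simplicially factorizable map, then invokes the fiberwise factorization machinery of Banakh--Valov to produce a commutative square in which $f$ is replaced by a \emph{perfect PL-map} $p_N\colon N\to L$ between simplicial complexes and $X$ maps to $N$ by a $\U_1$-map $\alpha$ with $\mathrm{mesh}\,\U_1<1/m$. Density of $\mathcal B_{p_N}(L)$ in $C(N,M)$ is then proved by induction on the skeleta of $L$, and the gluing problem is solved by a \emph{relative} approximation lemma over cubes (Lemma 3.3): over each $i$-simplex one perturbs to a fiberwise Bing map while keeping the map \emph{fixed} on the preimage of the boundary sphere, using the special extension metric on $M$ and the openness of the sets $\mathcal B_\pi(m,\I^k)$ to close an infinite iteration. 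Finally the solution is pulled back along $\alpha$, and the $1/m$ in the definition of $\mathcal B_f(m,Y)$ is recovered from the mesh of $\U_1$. Without some substitute for this relative-approximation-plus-skeletal-induction step (or for the factorization through perfect PL-maps), your splicing scheme cannot be completed.
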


Theorem 1.2  was established in \cite{vv} in the case $Y$ is
strongly countable dimensional or $M$ is a closed convex subset of a
Banach space and $Y$ a $C$-space.

Theorem 1.2 is applied in the last Section 4 to show that some
mapping theorems for extensional dimension established in \cite{ll}
in the realm of compact metric spaces remain valid for general
metric spaces.

The function space $C(X,M)$ appearing in this paper is endowed with
the source limitation topology whose neighborhood base at a given
function $f\in C(X,M)$ consists of the sets
$$B_\rho(f,\e)=\{g\in C(X,M):\rho(g,f)<\e\},$$ where $\rho$ is a
fixed compatible metric on $M$ and $\e:X\to(0,1]$ runs over
continuous positive functions on $X$. The symbol $\rho(f,g)<\e$
means that $\rho(f(x),g(x))<\e(x)$ for all $x\in X$. It is well know
that for metrizable spaces $X$ this topology doesn't depend on the
metric $\rho$ and it has the Baire property provided $M$ is
completely metrizable.

\section{Parametric Krasinkiewicz maps}

This section contains the proof of Theorem 1.1. We fix a complete
Krasinkiewicz $ANR$-space $M$. Then, by \cite[Lemma 2.1]{vv}, $M$
admits a complete metric $\rho$ generating its topology and having
the following extension property:

\begin{itemize}
\item
If $P$ is a paracompact space, $A\subset P$ a closed set and
$\phi\colon A\to M$ is a map, then for every continuous function
$\alpha\colon P\to (0,1]$ and every map $h\colon A\to M$ with
$\rho(h(z),\phi(z))<\alpha(z)/8$ for all $z\in A$, there exists a
map $\bar{h}\colon P\to M$ extending $h$ such that
$\rho(\bar{h}(z),\phi(z))<\alpha(z)$ for all $z\in P$.
\end{itemize}

Suppose $f\colon X\to Y$ is a perfect surjective map between metric
spaces and $d$ a metric generating the topology of $X$. For every
$A\subset X$ and $\delta>0$ let $B(A,\delta)=\{x\in X:
d(x,A)<\delta\}$. If $y\in Y$ and $m,n\geq 1$, then following the
notations from \cite{mv}, we denote by $\mathcal{K}_f(m,n,y)$ the
set of all maps $g\in C(X,M)$ such that:

\begin{itemize}
\item For each subcontinuum $L\subset f^{-1}(y)$ with diam$g(L)\ge 1/n$
there exists $x\in L$ such that $C(x,g|f^{-1}(y))\subset B(L,1/m)$.
Here, $g|f^{-1}(y)$ is the restriction of $g$ over $f^{-1}(y)$ and
$C(x,g|f^{-1}(y))$ denotes the component of the fiber
$g^{-1}(g(x))\cap f^{-1}(y)$ of $g|f^{-1}(y)$ containing $x$.
\end{itemize}
For a closed set $H\subset Y$ let $\mathcal{K}_f(m,n,H)$ be the
intersection of all $\mathcal{K}_f(m,n,y)$, $y\in H$. We also denote
by $\mathcal K_f(H)$ the set of all maps $g\in C(X,M)$ such that
$g|f^{-1}(y)$ is a Krasinkiewicz map for every $y\in H$. It was
established in \cite{mv} that each $\mathcal{K}_f(m,n,H)$ is open in
$C(X,M)$ and $\mathcal K_f(H)=\bigcap_{m,n \in \mathbb{N}}\mathcal
K_f(m,n,H)$.

Below $\I^k$ is the $k$-dimensional cube and $\mathbb S^{k-1}$ its
boundary.
\begin{lem}\label{compact}
Let $\pi\colon Z\to\I^k$, where $Z$ is a metric compactum and $k\geq
1$. Suppose $g_0\in C(Z,M)$ with $g_0\in\mathcal K_\pi(\mathbb
S^{k-1})$. Then for every $\epsilon>0$ there exists a map $g\in
C(Z,M)$ such that $g\in\mathcal K_\pi(\I^k)$, $g$ is
$\epsilon$-homotopic to $g_0$ and $g|\pi^{-1}(\mathbb
S^{k-1})=g_0|\pi^{-1}(\mathbb S^{k-1})$.
\end{lem}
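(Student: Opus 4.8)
The plan is to realize $g$ as a point of a dense $G_\delta$-subset of a suitable complete space of competitors, using the Baire category theorem together with the two facts recorded just before the lemma: each $\mathcal K_\pi(m,n,\I^k)$ is open in $C(Z,M)$ and $\mathcal K_\pi(\I^k)=\bigcap_{m,n}\mathcal K_\pi(m,n,\I^k)$. First I would fix a small $\delta>0$ and let $\mathcal G$ be the set of all $h\in C(Z,M)$ with $h=g_0$ on $\pi^{-1}(\mathbb S^{k-1})$ and $\rho(h,g_0)\le\delta$. Since $Z$ is compact the source limitation topology on $C(Z,M)$ is the uniform one, and $\mathcal G$ is a closed subset of the complete space $C(Z,M)$, hence a Baire space. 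Choosing $\delta$ small enough and invoking the extension property of $\rho$ on $Z\times[0,1]$ (with $A=Z\times\{0,1\}$), every $h\in\mathcal G$ is $\epsilon$-homotopic to $g_0$; thus membership in $\mathcal G$ already secures two of the three required conclusions. It then suffices to produce a point of $\mathcal K_\pi(\I^k)\cap\mathcal G$, and by the openness and intersection facts above this set is a $G_\delta$; by Baire it is nonempty as soon as each $\mathcal K_\pi(m,n,\I^k)\cap\mathcal G$ is dense in $\mathcal G$.

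The core of the argument is therefore to show, for fixed $m,n$, that $\mathcal K_\pi(m,n,\I^k)\cap\mathcal G$ is dense in $\mathcal G$: given $h\in\mathcal G$ and a tolerance I must find $h'$ arbitrarily close to $h$, still agreeing with $g_0$ on $\pi^{-1}(\mathbb S^{k-1})$, with $h'\in\mathcal K_\pi(m,n,y)$ for every $y\in\I^k$. I would cover $\I^k$ by finitely many closed sets $F_1,\dots,F_r$ of small diameter and choose $y_i\in F_i$. Applying the hypothesis that $M$ is a Krasinkiewicz space to the metric compactum $\pi^{-1}(y_i)$, I can approximate $h|\pi^{-1}(y_i)$ as closely as desired by a Krasinkiewicz map on that fiber. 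Because $\pi$ is perfect, the assignment $y\mapsto\pi^{-1}(y)$ is upper semicontinuous, so for $F_i$ of sufficiently small diameter a map that is Krasinkiewicz on the single fiber over $y_i$ should lie in $\mathcal K_\pi(m,n,y)$ for all $y\in F_i$; here the fixed slack $1/m,1/n$ in the definition of $\mathcal K_\pi(m,n,y)$ is exactly what lets the one-fiber condition spread to a whole neighborhood in the base. Near $\mathbb S^{k-1}$ I would simply keep $h=g_0$, using that $g_0\in\mathcal K_\pi(\mathbb S^{k-1})$ already furnishes $\mathcal K_\pi(m,n,y)$ for $y$ in a base-neighborhood of the boundary.

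The remaining task is to assemble the finitely many local modifications into a single $h'\in\mathcal G$ without spoiling the property already gained over the other $F_j$. For this I would glue step by step over $F_1,F_2,\dots$ by repeated use of the extension property of $\rho$ (whose built-in factor $1/8$ controls the size of each extension), arranging all successive perturbations to be fixed on $\pi^{-1}(\mathbb S^{k-1})$ and to remain within the $\delta$-ball; since $\mathcal K_\pi(m,n,F_j)$ is open, perturbations taken small enough preserve membership in each previously treated $\mathcal K_\pi(m,n,F_j)$, so that the finite intersection $\bigcap_j\mathcal K_\pi(m,n,F_j)=\mathcal K_\pi(m,n,\I^k)$ is attained. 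I expect the main obstacle to be precisely this gluing: the extension property necessarily alters the map on the overlaps $F_i\cap F_j$, so I must drive the successive tolerances down geometrically fast so that no later step destroys the fixed-scale Krasinkiewicz property won at an earlier step, and I must verify the semicontinuity claim that being Krasinkiewicz on the fiber over $y_i$ really does yield $\mathcal K_\pi(m,n,y)$ throughout a small $F_i$. It is at this last point that the quantitative behaviour of components of the fibers under the perfect map $\pi$, rather than any soft argument, is genuinely needed.
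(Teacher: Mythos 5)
Your global skeleton matches the paper's: reduce to producing, inside the set of maps that agree with $g_0$ on $\pi^{-1}(\mathbb S^{k-1})$ and stay $\epsilon$-close to $g_0$, a point of the $G_\delta$-set $\bigcap_{m,n}\mathcal K_\pi(m,n,\I^k)$ via a Baire category argument whose only real content is the density of each $\mathcal K_\pi(m,n,\I^k)$ among such maps; and your treatment of the boundary (keep $g_0$ there and spread $g_0\in\mathcal K_\pi(m,n,y)$, $y\in\mathbb S^{k-1}$, to a neighborhood of $\mathbb S^{k-1}$ by the stability lemma, which is \cite[Lemma 2.3]{mv}) is exactly the paper's. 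The gap is in the interior. You propose to fix a fine closed cover $F_1,\dots,F_r$ of $\I^k$ first, approximate $h$ on each single fiber $\pi^{-1}(y_i)$ by a Krasinkiewicz map, and then argue that for $F_i$ of sufficiently small diameter the one-fiber property spreads to all of $F_i$. The quantifiers here are in the wrong order: the neighborhood $V_{y_i}$ on which membership in $\mathcal K_\pi(m,n,\cdot)$ spreads is produced by the stability lemma only \emph{after} the approximating map over $\pi^{-1}(y_i)$ (extended to a global map) has been chosen, and there is no lower bound on its size in terms of $\mathrm{diam}\,F_i$. Trying to repair this by choosing the local maps first and extracting a finite subcover from the resulting $V_y$'s runs into a genuine circularity in the gluing: to keep the glued map within $\delta_{y_i}$ of the $i$-th local map over $\pi^{-1}(V_{y_i})$ you need the mutual distances of the local maps (controlled by the tolerance $\delta'$ with which they approximate $h$) to be smaller than $\min_i\delta_{y_i}$, but the $\delta_{y_i}$ are only determined after $\delta'$ has been fixed; the overlaps of the patches (unavoidable, since $\I^k$ is connected) prevent breaking this circle by ordering them.

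The paper avoids the interior cover altogether: since $M$ is a Krasinkiewicz space, the hypothesis is applied once to the whole compactum $Z$, giving a single map $g_1\in\mathcal K_\pi(\I^k)$ that is $(\eta/8)$-close to the current map (a Krasinkiewicz map on $Z$ restricts to a Krasinkiewicz map on every fiber $\pi^{-1}(y)$). The new map is then defined to equal $g_0$ on $\pi^{-1}(\mathbb S^{k-1})$ and $g_1$ on $\pi^{-1}(\I^k\setminus V)$, where $V$ is a finite union of stability neighborhoods of boundary points, and is extended by the extension property of $\rho$. The covering/stability argument is thus used only over $\mathbb S^{k-1}$, where the relevant map is the fixed $g_0$, so its stability data can be computed in advance and no circularity arises. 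Replacing your fiberwise patchwork by this one global approximation is the missing step; as written, your density argument does not close.
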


\begin{proof}
Since $M$ is an $ANR$, any two sufficiently close maps from $X$ into
$M$ are homotopic. Hence, we need to prove that for every
$\epsilon>0$ there exists a map $g\in\mathcal K_\pi(\I^k)$ which is
$\epsilon$-close to $g_0$ and $g|\Omega=g_0|\Omega$, where
$\Omega=\pi^{-1}(\mathbb S^{k-1})$. According to \cite[Proposition
2.4]{mv}, each set $T(m,n)=\mathcal K_\pi(m,n,\I^k)$ is open in
$C(Z,M)$.

\textit{Claim $1$. Let $g\in C(Z,M)$ with $g|\Omega=g_0|\Omega$.
Then for any  $m,n\geq 1$ and $\delta>0$ there exists $h\in T(m,n)$
such that $h|\Omega=g_0|\Omega$ and $h$ is $\delta$-close to $g$.}

We fix $m,n\geq 1$. Since $g|\Omega=g_0|\Omega$, $g\in\mathcal
K_\pi(m,n,y)$ for every $y\in\mathbb S^{k-1}$. Then, by \cite[Lemma
2.3]{mv}, there exist a neighborhood $V_y$ of $y$ in $\I^k$ and
$\delta_y>0$ satisfying the following condition: For every $y'\in
V_y$ and $g'\in C(Z,M)$ with $\rho(g'(x),g(x))<\delta_y$ for all
$x\in\pi^{-1}(y')$ we have $g'\in\mathcal K_\pi(m,n,y')$. Choose
finitely many $y_i\in\mathbb S^{k-1}$, $i\leq s$, such that
$\displaystyle V=\bigcup_{i\leq s}V_{y_i}$  covers $\mathbb
S^{k-1}$. Let $F=\I^k\backslash V$ and
$\displaystyle\eta=\min\{\delta,\delta_{y_i}:i\leq s\}$. Because $M$
is a Krasinkiewicz space,  there is a map $g_1\in\mathcal
K_\pi(\I^k)$ which is $(\eta/8)$-close to $g$. Then the map
$g_2\colon \Omega\cup\pi^{-1}(F)\to M$, $g_2|\Omega=g|\Omega$ and
$g_2|\pi^{-1}(F)=g_1|\pi^{-1}(F)$, is $(\eta/8)$-close to
$g|\Omega\cup\pi^{-1}(F)$. According to the extension property of
$(M,\rho)$, $g_2$ can be extended to a map $h\colon Z\to M$ which is
$\eta$-close to $g$. We have $h\in\mathcal K_\pi(m,n,y)$ for all
$y\in\I^k$. Indeed, this follows from the choice of $V_{y_i}$ and
$\delta_i$, $i\leq s$, when $y\in V$, and from the fact that
$g_1\in\mathcal K_\pi(\I^k)$ when $y\in F$. Hence, $h\in T(m,n)$
which completes the proof of the claim.

Now, we arrange all $T(m,n)$, $m,n\geq 1$, as a sequence
$\{T_j:j\geq 1\}$. Using the above claim and the openness of each
$T_j$ in $C(Z,M)$, we construct by induction a sequence of positive
numbers $\{\epsilon_j\}_{j\geq 0}$ and a sequence of maps
$\{g_j\}_{j\geq 1}\subset C(Z,M)$ satisfying the following
conditions for any $j\geq 0$:

\begin{itemize}
\item $\epsilon_0=\epsilon/3$ and $\epsilon_{j+1}\leq\epsilon_j/2$;
\item $g_{j+1}\in B_\rho(g_j,\epsilon_j)\cap T_{j+1}$;
\item $B_\rho(g_{j+1},3\epsilon_{j+1})\subset T_{j+1}$;
\item $g_{j+1}|\Omega=g_0|\Omega$.
\end{itemize}

The sequence $\{g_j\}_{j\geq 1}$ converges uniformly to a map $g\in
C(Z,M)$. It follows from the construction that $g|\Omega=g_0|\Omega$
and $g$ is $\epsilon$-close to $g_0$. Moreover,
$\displaystyle\rho(g(x),g_j(x))\leq\sum_{i=j}^{\infty}\epsilon_i\leq
2\epsilon_j$ for any $j\geq 1$ and $x\in Z$. Therefore, $g\in T_j$,
$j\geq 1$. Since, by \cite[Proposition 2.1]{mv}, $\mathcal
K_\pi(\I^k)$ is the intersection of all $T_j$, $g$ is as required.
\end{proof}

Next step is to prove that if $f\colon N\to L$ is a perfect $PL$-map
between simplicial complexes with the $CW$-topology, then $\mathcal
K_f(L)$ is dense in $C(N,M)$. Recall that $f$ is a $PL$-map if
$f(\sigma)$ is contained in a simplex of $L$ and $f$ is linear on
$\sigma$ for every simplex $\sigma$ of $N$. In general,  $N$ and $L$
are  not metrizable,  but all of their compact subsets are
metrizable.

\begin{lem}\label{simcomplex}
Let $N,L$ be simplicial complexes and $f\colon N\to L$ a perfect
$PL$-map. Then $\mathcal K_f(L)$ is dense in $C(N,M)$.
\end{lem}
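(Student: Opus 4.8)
The plan is to reduce density in the source-limitation topology to a skeletal induction that is resolved one simplex at a time by Lemma~\ref{compact}. Fix an arbitrary $g_0\in C(N,M)$ and a continuous $\epsilon\colon N\to(0,1]$; it suffices to produce $g\in\mathcal K_f(L)$ with $\rho(g(x),g_0(x))<\epsilon(x)$ for all $x\in N$. Writing $L^{(k)}$ for the $k$-skeleton of $L$ (and $L^{(-1)}=\emptyset$), I would construct maps $g_0,g_1,g_2,\dots\in C(N,M)$ so that, for every $k\ge0$, one has $g_{k+1}\in\mathcal K_f(L^{(k)})$, the maps $g_{k+1}$ and $g_k$ coincide on the closed set $f^{-1}(L^{(k-1)})$, and $\rho(g_{k+1},g_k)<\epsilon/2^{\,k+2}$. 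The map $g_k$ entering stage $k$ thus satisfies $g_k\in\mathcal K_f(L^{(k-1)})$, which for $k=0$ is vacuous since $L^{(-1)}=\emptyset$.

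For the step at stage $k$ I would treat each $k$-simplex $\sigma$ of $L$ separately. Choosing a homeomorphism of the pair $(\sigma,\partial\sigma)$ onto $(\I^k,\mathbb S^{k-1})$ and using that $f$ is perfect, the restriction $\pi_\sigma:=f|f^{-1}(\sigma)\colon f^{-1}(\sigma)\to\sigma$ is a map from a metric compactum onto $\I^k$. As $\partial\sigma\subset L^{(k-1)}$, the inductive hypothesis gives $g_k|f^{-1}(\sigma)\in\mathcal K_{\pi_\sigma}(\mathbb S^{k-1})$, so Lemma~\ref{compact} provides a map $g_\sigma$ on $f^{-1}(\sigma)$ with $g_\sigma\in\mathcal K_{\pi_\sigma}(\I^k)$, with $g_\sigma=g_k$ on $f^{-1}(\partial\sigma)$, and with $g_\sigma$ as $\rho$-close to $g_k|f^{-1}(\sigma)$ as desired; since $\epsilon$ attains a positive minimum on the compactum $f^{-1}(\sigma)$, I can arrange $\rho(g_\sigma,g_k)<\epsilon/2^{\,k+5}$ there. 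Distinct $k$-simplices meet only along faces lying in $L^{(k-1)}$, where every $g_\sigma$ agrees with $g_k$; hence the $g_\sigma$ glue with $g_k|f^{-1}(L^{(k-1)})$ to a single map $h$ on the closed set $f^{-1}(L^{(k)})$. Because $f$ is perfect, every compact subset of $f^{-1}(L^{(k)})$ is carried into a finite subcomplex and so is covered by finitely many of the closed sets $f^{-1}(\sigma)$; as $h$ is continuous on each of these and $f^{-1}(L^{(k)})$ is a $k$-space, $h$ is continuous. Moreover $h|f^{-1}(y)$ is Krasinkiewicz for every $y\in L^{(k)}$ and $\rho(h,g_k)<\epsilon/2^{\,k+5}$.

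To globalize $h$ I would apply the extension property of $(M,\rho)$ with $P=N$ (which is paracompact), $A=f^{-1}(L^{(k)})$, $\phi=g_k$ and $\alpha=\epsilon/2^{\,k+2}$: since $\rho(h,g_k)<\alpha/8$ on $A$, it yields an extension $g_{k+1}\colon N\to M$ of $h$ with $\rho(g_{k+1},g_k)<\epsilon/2^{\,k+2}$ on all of $N$. By construction $g_{k+1}=h$ on $f^{-1}(L^{(k)})$, so $g_{k+1}\in\mathcal K_f(L^{(k)})$, and $g_{k+1}=g_k$ on $f^{-1}(L^{(k-1)})$, which closes the induction.

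Finally I would let $g$ be the pointwise limit of $\{g_k\}$. The crucial point is stabilization on the simplices of $N$: each closed simplex $\tau$ of $N$ is compact and $f(\tau)\subset L^{(d)}$ for some $d$, so for $k>d$ we have $\tau\subset f^{-1}(L^{(k-1)})$ and hence $g_{k+1}=g_k$ on $\tau$; thus $g_k|\tau$ is eventually constant, $g|\tau$ is continuous, and since $N$ carries the weak topology with respect to its simplices, $g$ is continuous. For the same reason $g=g_{k+1}$ on $f^{-1}(L^{(k)})$ for every $k$, whence $g\in\mathcal K_f(L^{(k)})$ for all $k$, i.e.\ $g\in\mathcal K_f(L)$, and the bounds telescope to $\rho(g,g_0)\le\sum_{k\ge0}\epsilon/2^{\,k+2}=\epsilon/2<\epsilon$. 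I expect the main obstacle to be exactly this passage to the limit in the non-metrizable, possibly infinite-dimensional complexes $N$ and $L$: the usual metric extension results are unavailable, so the extension property (valid for paracompact $P$) must carry out the gluing over the preimage of each skeleton, and the stabilization argument is what simultaneously secures continuity of $g$ and the persistence of the fibrewise Krasinkiewicz property assembled skeleton by skeleton.
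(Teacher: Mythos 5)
Your proof is correct and follows essentially the same route as the paper's: a skeleton-by-skeleton induction over $L$, applying Lemma~\ref{compact} to $f^{-1}(\sigma)\to\sigma\cong\I^k$ on each $k$-simplex, gluing along $f^{-1}(L^{(k-1)})$, extending over $N$, and passing to the stabilizing limit. The only (harmless) variation is that you globalize each stage via the metric extension property of $(M,\rho)$ from Section~2, whereas the paper invokes the Homotopy Extension Theorem; since only $\rho$-closeness is needed in the end, both work, and your stabilization argument for continuity of the limit in the $CW$-topology is a slightly more explicit version of what the paper leaves implicit.
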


\begin{proof}
Fix $g\in C(N,M)$ and $\alpha\in C(N,(0,1])$. afollowing the proof
of \cite[Lemma 11.3]{bv}, we are going to find a map $h\in\mathcal
K_f(L)$ which is $\alpha$-close to $g$. To this end, let $L^{(i)}$,
$i\geq 0$, denote the $i$-dimensional skeleton of $L$. We put
$L^{(-1)}=\varnothing$ and $h_{-1}=g$. Construct inductively a
sequence $(h_i:N\to M)_{i\geq 0}$ of maps such that
\begin{itemize}
\item $h_{i}|f^{-1}(L^{(i-1)})=h_{i-1}|f^{-1}(L^{(i-1)})$;
\item $\displaystyle h_{i}$ is $\displaystyle\frac{\alpha}{2^{i+2}}$-homotopic to $h_{i-1}$;
\item $h_i\in\mathcal K_f(L^{(i)})$ for every $i$.
\end{itemize}

Assuming that the map $h_{i-1}:N\to M$ has been constructed,
consider the complement $L^{(i)}\setminus L^{(i-1)}=\sqcup_{j\in
J_i}\overset{\circ}\sigma_j$, which is the discrete union of open
$i$-dimensional simplexes.  Since $h_{i-1}\in\mathcal
K_f(L^{(i-1)})$, we can apply Lemma~\ref{compact} for every simplex
$\sigma_j\in L^{(i)}$ to find a map $g_j:f^{-1}(\sigma_j)\to M$ such
that
\begin{itemize}
\item $g_j$ coincides with $h_{i-1}$ on the set $f^{-1}(\sigma^{(i-1)}_j)$;
\item $g_j$ is $\displaystyle\frac{\alpha}{2^{i+2}}$-homotopic to
$h_{i-1}$;
\item  all restrictions $g_j|f^{-1}(y)$, $y\in\sigma_j$, are Krasinkiewicz maps.
\end{itemize}

Define a map $\varphi_i:f^{-1}(L^{(i)})\to M$ by the formula
$$\varphi_i(x)=\begin{cases} h_{i-1}(x)&\mbox{if $x\in
f^{-1}(L^{(i-1)})$;}\\ g_j(x)&\mbox{if $x\in f^{-1}(\sigma_j)$.}
\end{cases}$$ It can be shown that $\varphi_i$ is
$\displaystyle\frac{\alpha}{2^{i+2}}$-homotopic to
$h_{i-1}|f^{-1}(L^{(i)})$. By the Homotopy Extension Theorem,
there exists a continuous extension $h_i:N\to M$ of the map $\varphi_i$ which is
$\displaystyle\frac{\alpha}{2^{i+2}}$-homotopic to $h_{i-1}$.
The map $h_i$ satisfies the inductive conditions.

Then the limit map $h=\lim_{i\to\infty}h_i:N\to M$ is well-defined,
continuous and $\alpha$-close to $g$. Finally, since
$h|f^{-1}(L^{(i)})=h_i|f^{-1}(L^{(i)})$ for every $i\geq 0$,
$h\in\mathcal K_f(L)$.
\end{proof}

Now we can complete the proof of Theorem 1.1. We use some arguments
from the proof of \cite[Proposition 3.4]{bv1}.

\begin{pro}\label{general reg}
Suppose $f\colon X\to Y$ is a perfect surjection between metric
spaces. Then  $\mathcal K_f(Y)$ is a dense and $G_\delta$-subset of
$C(X,M)$.
\end{pro}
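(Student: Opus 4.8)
The plan is to prove that $\mathcal{K}_f(Y)$ is dense and $G_\delta$ by reducing the general perfect surjection to the simplicial case handled in Lemma~\ref{simcomplex}. Recall that $\mathcal{K}_f(Y) = \bigcap_{m,n} \mathcal{K}_f(m,n,Y)$ and each $\mathcal{K}_f(m,n,Y)$ is open in $C(X,M)$; since $C(X,M)$ has the Baire property (as $M$ is complete), it suffices to show each $\mathcal{K}_f(m,n,Y)$ is dense. So the real content is a density statement, and by the Baire property the $G_\delta$ conclusion follows automatically once density is established.

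First I would approximate the perfect map $f\colon X \to Y$ by a map into a simplicial complex. The standard device (used in \cite{bv1}) is to embed $Y$ into a suitable space and take a nerve: since $f$ is perfect, one builds a commutative diagram relating $f$ to a perfect $PL$-map $\tilde{f}\colon N \to L$ between simplicial complexes, typically via inverse limits of nerves of locally finite open covers of $Y$ refined through $f$. Concretely, I expect to realize $X$ (or a space containing the relevant fibers) as a subset of such an $N$, with $f$ factoring compatibly through $\tilde{f}$, so that the fiberwise Krasinkiewicz property on $L$ pulls back to the fiberwise property on $Y$. The key point is that the fibers $f^{-1}(y)$ sit inside fibers $\tilde{f}^{-1}(\ell)$ in a way that preserves subcontinua and their components, so a map lying in $\mathcal{K}_{\tilde f}(L)$ restricts to one lying in $\mathcal{K}_f(Y)$.

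Having set up this reduction, the argument proceeds as follows. Given $g \in C(X,M)$ and a continuous $\varepsilon\colon X \to (0,1]$, I would first extend $g$ (using the extension property of $(M,\rho)$ recorded at the start of this section) to a map $G$ defined on the larger complex $N$, controlling the error by $\varepsilon$. Then apply Lemma~\ref{simcomplex} to the $PL$-map $\tilde{f}$ to obtain a map $H \in \mathcal{K}_{\tilde f}(L)$ that is $(\varepsilon/2)$-close (in the pulled-back sense) to $G$. Restricting $H$ back to $X$ yields a map $h$ that is $\varepsilon$-close to $g$ and, by the compatibility of the two fibrations, belongs to $\mathcal{K}_f(Y)$. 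This establishes density of $\mathcal{K}_f(Y)$, hence of every $\mathcal{K}_f(m,n,Y)$, and the Baire property finishes the proof.

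The main obstacle will be the reduction step itself: constructing the simplicial complexes $N, L$ and the $PL$-map $\tilde{f}$ so that (i) $\tilde{f}$ is genuinely perfect, (ii) the source limitation metrics match up so that $\varepsilon$-closeness transfers correctly across the pullback, and (iii) the fiberwise Krasinkiewicz condition descends from $L$-fibers to the actual $Y$-fibers — this last point requires that components of fibers of the restricted map correspond properly under the factorization, which is delicate because $N$ and $L$ are not metrizable (only their compact subsets are). I would lean on the machinery of \cite[Proposition 3.4]{bv1} to handle the nerve construction and the transfer of the approximation, treating that as the technical heart while the extension and limit arguments are routine.
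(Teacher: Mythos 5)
Your overall architecture --- reduce to a perfect $PL$-map between simplicial complexes via nerves of covers, apply Lemma~\ref{simcomplex} there, and pull the result back --- is exactly the paper's strategy. But there is a genuine gap in the pullback step. You claim that ``the fibers $f^{-1}(y)$ sit inside fibers $\tilde f^{-1}(\ell)$ in a way that preserves subcontinua and their components, so a map lying in $\mathcal K_{\tilde f}(L)$ restricts to one lying in $\mathcal K_f(Y)$.'' This is false as stated: the comparison map $\alpha\colon X\to N$ produced by the nerve construction is a $\U_1$-map, not an embedding, so it collapses sets of diameter up to $\mesh\U_1$. If $P\subset f^{-1}(y)$ is a continuum and $C$ is a component of a fiber of $h|f^{-1}(y)$ (where $h=\varphi_1\circ\alpha$), one only gets $\alpha(C)\subset\alpha(P)$, hence $C\subset B(P,\mesh\U_1)$ --- not $C\subset P$. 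So the pulled-back map lands only in the approximate class $\mathcal K_f(m,n,Y)$ with $1/m$ bounding the mesh, never in $\mathcal K_f(Y)$ itself. This is precisely why the paper fixes $m,n$ first, chooses the cover $\U_1$ with $\mesh\U_1<1/m$, and proves density of each open set $\mathcal K_f(m,n,Y)$ separately, letting the Baire property assemble the $G_\delta$ conclusion. You state this reduction at the outset but then abandon it in the execution; the honest version of your argument cannot conclude $h\in\mathcal K_f(Y)$, and the misdiagnosis of the difficulty as non-metrizability of $N$ and $L$ (rather than non-injectivity of $\alpha$) suggests the gap would not be caught in writing it up.

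A secondary problem: you propose to ``extend'' $g$ to a map $G$ on $N$ using the extension property of $(M,\rho)$. Since $X$ is not a subspace of $N$ (again, $\alpha$ is not an embedding), there is nothing to extend. The correct mechanism, which the paper uses, is to first approximate $g$ by a simplicially factorizable map $g=g^D\circ g_D$ and then invoke the factorization lemmas of \cite{bv} to replace $g_D$ by a map of the form $\varphi_D\circ\alpha$ that is $\epsilon/2$-close to it with respect to the induced pseudometric; the extension property of $(M,\rho)$ plays no role in this proposition (it is used inside Lemma~\ref{compact}). Your instinct to lean on \cite{bv1} for the nerve machinery is right, but the transfer of the approximation goes through factorization, not extension.
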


\begin{proof}
Since $C(X,M)$ has the Baire property and $\mathcal K_f(Y)$ is the
intersection of the open sets $\mathcal K_f(m,n,Y)\subset C(X,M)$,
$m,n\geq 1$, it suffices to show that each $\mathcal K_f(m,n,Y)$ is
dense in $C(X,M)$. Let $\epsilon\in C(X,(0,1])$ and a $g\in C(X,M)$.
We are going to find a map $h\in\mathcal K_f(m,n,Y)$ such that
$\rho(g(x),h(x))<\epsilon(x)$ for all $x\in X$. Since $M$ is an
$ANR$, $g$ can be approximated by simplicially factorizable maps
(i.e., maps $g'\in C(X,M)$ such that $g'=g_2\circ g_1$, where $g_1$
is a map from $X$ into a simplicial complex $L'$ and $g_2\colon
L'\to M$). So, $g$ itself can be assumed to be simplicially
factorizable. Choose a simplicial complex $D$ and maps $g_D\colon
X\to D$, $g^D\colon D\to M$ with $g=g^D\circ g_D$. The metric $\rho$
induces a continuous pseudometric $\rho_D$ on $D$,
$\rho_D(x,y)=\rho(g^D(x),g^D(y))$. Since $D$ is a neighborhood
retract of a locally convex space, we can apply \cite[Lemma 8.1]{bv}
to find an open cover $\U$ of $X$ satisfying the following
condition: if $\alpha\colon X\to K$ is a $\U$-map into a paracompact
space $K$ (i.e., $\alpha^{-1}(\omega)$ refines $\U$ for some open
cover $\omega$ of $K$), then there exists a map $q'\colon G\to D$,
where $G$ is an open neighborhood of $\overline{\alpha(X)}$ in $K$,
such that $g_D$ and $q'\circ\alpha$ are $\epsilon/2$-close with
respect to the pseudometric $\rho_D$. Let $\U_1$ be an open cover of
$X$ refining $\U$ with $\mesh\U_1<1/m$ and $\inf\{\epsilon(x):x\in
U\}>0$ for all $U\in\U_1$.

Next, according to \cite[Theorem 6]{bv}, there exists an open cover
$\V$ of $Y$ such that: for any $\V$-map $\beta\colon Y\to L$ into a
simplicial complex $L$ we can find an $\U_1$-map $\alpha\colon X\to
K$ into a simplicial complex $K$ and a perfect $PL$-map $p\colon
K\to L$ with $\beta\circ f=p\circ\alpha$. We can assume that $\V$ is
locally finite. Take $L$ to be the nerve of the cover $\V$ and
$\beta\colon Y\to L$ the corresponding natural map. Then there are a
simplicial complex $K$ and maps $p$ and $\alpha$ satisfying the
above conditions. Hence, the following diagram is commutative:
$$
\begin{CD}
X@>{\alpha}>>K\cr @V{f}VV @VV{p}V\cr Y@>{\beta}>>L\cr
\end{CD}
$$
Since $K$ is paracompact, the choice of the cover $\U$ guarantees
the existence of a map $\varphi_D\colon G\to D$, where $G\subset K$
is an open neighborhood of $\overline{\alpha(X)}$, such that $g_D$
and $h_D=\varphi_D\circ\alpha$ are $\epsilon/2$-close with respect
to $\rho_D$. Then, according to the definition of $\rho_D$,
$h'=g^D\circ \varphi_D\circ\alpha$ is $\epsilon/2$-close to $g$ with
respect to $\rho$. Replacing the triangulation of $K$ by a suitable
subdivision, we may additionally assume that no simplex of $K$ meets
both $\overline{\alpha(X)}$ and $K\backslash G$. So, the union $N$
of all simplexes $\sigma\in K$ with
$\sigma\cap\overline{\alpha(X)}\neq\varnothing$ is a subcomplex of
$K$ and $N\subset G$. Moreover, since $N$ is closed in $K$,
$p_N=p|N\colon N\to L$ is a perfect map. Therefore, we have the
following commutative diagram, where $\varphi=g^D\circ \varphi_D$:

\begin{picture}(120,95)(-100,0)
\put(30,10){$L$} \put(0,30){$Y$} \put(12,28){\vector(3,-2){18}}
\put(14,14){\small $\beta$} \put(1,70){$X$}
\put(5,66){\vector(0,-1){25}} \put(-1,53){\small $f$}
\put(11,73){\vector(1,0){45}} \put(30,77){\small $h'$}
\put(12,68){\vector(3,-2){18}} \put(15,56){\small $\alpha$}
\put(31,50){$N$} \put(35,46){\vector(0,-1){25}}
 \put(37,33){\small $p_N$}
\put(46,58){\vector(4,3){13}} \put(44,64){\small $\varphi$}
 \put(60,70){$M$}
\end{picture}

Using that $\alpha$ is a $\U_1$-map and
$\inf\{\epsilon(x):x\in U\}>0$ for all $U\in\U_1$, we can
construct a continuous function $\epsilon_1:N\to(0,1]$
with $\epsilon_1\circ\alpha\leq\epsilon$.
By Lemma~\ref{simcomplex}, there exists a map $\varphi_1\in C(N,M)$
which is $\epsilon_1/2$-close to  $\varphi$ and
$\varphi_1\in\mathcal K_{p_N}(L)$. Let $h=\varphi_1\circ\alpha$.
Then $h$ and $\varphi\circ \alpha$ are $\epsilon/2$-close because
$\epsilon_1\circ\alpha\leq\epsilon$. On the other hand,
$\varphi\circ\alpha=h'$ is $\epsilon/2$-close to $g$. Hence, $g$ and
$h$ are $\epsilon$-close.

It remains to show that $h\in\mathcal K_f(m,n,Y)$. To this end, fix
$y\in Y$ and a continuum $P\subset f^{-1}(y)$ with $\diam h(P)\geq
1/n$. Then $P'=\alpha(P)$ is a continuum in $p_N^{-1}(\beta(y))$
which is not contained in any fiber of the restriction map
$\varphi_1|p_N^{-1}(\beta(y))$. Since the last map is Krasinkiewicz,
there exists a point $z^*\in P'$ such that $C'\subset P'$, where
$C'$ is the component of
$p_N^{-1}(\beta(y))\cap\varphi_1^{-1}(\varphi_1(z^*))$ containing
$z^*$ . Choose $x^*\in P$ with $\alpha(x^*)=z^*$ and a component $C$
of $f^{-1}(y)\cap h^{-1}(h(x^*))$ which contains $x^*$. Then
$\alpha(C)$ is a connected subset of
$p_N^{-1}(\beta(y))\cap\varphi_1^{-1}(\varphi_1(z^*))$ meeting $C'$.
Consequently, $\alpha(C)\subset C'\subset\alpha (P)$. Hence,
$\alpha^{-1}(\alpha(x))\cap P\neq\varnothing$ for any $x\in C$.
Finally, since $\mesh\U_1<1/m$ and $\alpha$ is an $\U_1$-map,
$diam\alpha^{-1}(\alpha(x))<1/m$ for all $x\in C$. So, $C\subset
B(P,1/m)$. Therefore, $h\in\mathcal K_f(m,n,y)$ which completes the
proof.
\end{proof}

\section{Parametric Bing maps}

In this section we establish the proof of Theorem 1.2. Everywhere in
the section $(M,\rho)$ is a fixed complete $ANR$ free space having
the extension property from Section 2. Suppose $f\colon X\to Y$ is a
perfect surjection and $d$ is a metric on $X$ generating its
topology. For any $y\in Y$ and $m\geq 1$ let $\mathcal B_f(m,y)$ be
the set of all maps $g\in C(X,M)$ such that:

\begin{itemize}
\item If $z\in M$ and $K_0, K_1$ are two continua in $f^{-1}(y)\cap
g^{-1}(z)$ such that $K_0\cap K_1\neq\varnothing$, then either
$K_0\subset B(K_1,1/m)$ or $K_1\subset B(K_0,1/m)$.
\end{itemize}

For any closed set $H\subset Y$ let $\mathcal B_f(m,H)=\bigcap_{y\in
H}\mathcal B_f(m,y)$ and $\mathcal B_f(H)$ be the set of all $g\in
C(X,M)$ such that $g|f^{-1}(y)$ is a Bing map for any $y\in H$.

\begin{lem}
$\mathcal B_f(H)=\bigcap_{m\geq 1}\mathcal B_f(m,H)$.
\end{lem}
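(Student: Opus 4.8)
The plan is to prove the two inclusions separately.

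For the forward inclusion $\mathcal B_f(H)\subset\bigcap_{m\geq 1}\mathcal B_f(m,H)$, I would fix $g\in\mathcal B_f(H)$, an integer $m\geq 1$, and a point $y\in H$, and show $g\in\mathcal B_f(m,y)$. So suppose $z\in M$ and $K_0,K_1$ are continua in $f^{-1}(y)\cap g^{-1}(z)$ with $K_0\cap K_1\neq\varnothing$. Since $g|f^{-1}(y)$ is a Bing map, the fiber $f^{-1}(y)$ is a Bing space, meaning every subcontinuum of it is indecomposable. Now $K_0\cup K_1$ is a subcontinuum of $f^{-1}(y)$ (it is the union of two overlapping continua, hence connected and compact), so it is indecomposable. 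Recall that a continuum is indecomposable precisely when it cannot be written as the union of two proper subcontinua; equivalently, any two subcontinua whose union is the whole space must have one containing the other, since a proper subcontinuum of an indecomposable continuum has empty interior. From indecomposability of $K_0\cup K_1$ I would conclude that either $K_0\subset K_1$ or $K_1\subset K_0$: if neither held, then $K_0$ and $K_1$ would be two proper subcontinua covering $K_0\cup K_1$, contradicting indecomposability. The containment $K_0\subset K_1$ trivially gives $K_0\subset B(K_1,1/m)$, so the defining condition of $\mathcal B_f(m,y)$ holds.

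For the reverse inclusion $\bigcap_{m\geq 1}\mathcal B_f(m,H)\subset\mathcal B_f(H)$, I would fix $g\in\bigcap_{m\geq 1}\mathcal B_f(m,H)$ and $y\in H$, and show $g|f^{-1}(y)$ is a Bing map, i.e. $f^{-1}(y)$ is a Bing space. Take any subcontinuum $T\subset f^{-1}(y)$; I must show $T$ is indecomposable. Suppose for contradiction it is decomposable, so $T=K_0\cup K_1$ with $K_0,K_1$ proper subcontinua of $T$. Since they are proper, neither has $T$ contained in it, and in particular $K_0\not\subset K_1$ and $K_1\not\subset K_0$. The key point is to produce a \emph{common value} $z\in M$: the definition of $\mathcal B_f(m,y)$ requires $K_0,K_1\subset g^{-1}(z)$, whereas a priori $g$ need not be constant on $K_0$ or $K_1$. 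This is the step I expect to be the main obstacle, and the resolution is to pass to fibers of $g$ first: rather than decomposing $T$ directly, one should apply the hypothesis to the fibers $g^{-1}(z)\cap f^{-1}(y)$ themselves. Concretely, a continuum $T\subset f^{-1}(y)$ on which $g$ is nonconstant already fails to lie in a single fiber, so indecomposability is only at issue for continua contained in a single fiber; I would therefore restrict attention to a subcontinuum $T$ lying in some fiber $g^{-1}(z)\cap f^{-1}(y)$ and suppose $T=K_0\cup K_1$ is a proper decomposition there, so that $K_0,K_1\subset f^{-1}(y)\cap g^{-1}(z)$ with $K_0\cap K_1\neq\varnothing$.

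With $K_0,K_1\subset f^{-1}(y)\cap g^{-1}(z)$, $K_0\cap K_1\neq\varnothing$, and $K_0\not\subset K_1$, $K_1\not\subset K_0$, I would derive a contradiction by letting $m\to\infty$. Since $K_1$ is a proper subcontinuum not containing $K_0$, there is a point $x_0\in K_0\setminus K_1$; as $K_1$ is compact, $\dist(x_0,K_1)=:\delta_0>0$, so choosing $m$ with $1/m<\delta_0$ gives $x_0\notin B(K_1,1/m)$, whence $K_0\not\subset B(K_1,1/m)$. Symmetrically, picking a point $x_1\in K_1\setminus K_0$ and $m$ large gives $K_1\not\subset B(K_0,1/m)$. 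Taking $m$ large enough to violate both neighborhood inclusions simultaneously contradicts $g\in\mathcal B_f(m,y)$. Hence no such decomposition exists, every subcontinuum of $f^{-1}(y)$ is indecomposable, $f^{-1}(y)$ is a Bing space, and $g|f^{-1}(y)$ is a Bing map. Since $y\in H$ was arbitrary, $g\in\mathcal B_f(H)$, completing the reverse inclusion and hence the equality.
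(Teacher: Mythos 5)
Your proof is correct and, for the substantive direction, takes the same route as the paper: the paper likewise dismisses the inclusion $\mathcal B_f(H)\subset\bigcap_{m\geq 1}\mathcal B_f(m,H)$ as easy and proves the converse by taking a proper decomposition $K_0\cup K_1$ inside a single fiber $f^{-1}(y)\cap g^{-1}(z)$, picking $x_i\in K_i\setminus K_{1-i}$, and choosing $m$ so large that $x_i\notin B(K_{1-i},1/m)$ for $i=0,1$. One slip in your forward direction should be repaired: the statement that $g|f^{-1}(y)$ is a Bing map does not mean that $f^{-1}(y)$ is a Bing space (this already fails when $f^{-1}(y)$ is an arc on which $g$ is injective); it means that each fiber $g^{-1}(z)\cap f^{-1}(y)$ of the restriction is a Bing space. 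Your argument survives unchanged because $K_0\cup K_1$ is contained in precisely such a fiber, so its indecomposability, and hence the containment $K_0\subset K_1$ or $K_1\subset K_0$, follows from the correct reading of the definition.
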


\begin{proof}
It is easily seen that $\mathcal B_f(H)\subset\bigcap_{m\geq
1}\mathcal B_f(m,H)$. Assume $g\in \bigcap_{m\geq 1}\mathcal
B_f(m,H)$ and $y\in H$. Then $g|f^{-1}(y)$ is a Bing map. Indeed,
otherwise there exists $z\in M$ and two non-trivial continua
$K_0,K_1\subset f^{-1}(y)\cap g^{-1}(z)$ with a common point such
that $K_i\backslash K_{1-i}\neq\varnothing$ for $i=0,1$. Let $x_i\in
K_i\backslash K_{1-i}$ and choose $m$ so big that $x_i\not\in
B(K_{1-i},1/m)$, $i=0,1$. So, $K_i\nsubseteq B(K_{1-i},1/m)$ for any
$i=0,1$ which contradicts the fact that $g\in\mathcal B_f(m,y)$.
\end{proof}

\begin{lem}
Each $\mathcal B_f(m,H)$ is open in $C(X,M)$.
\end{lem}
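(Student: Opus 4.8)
The plan is to establish a single-fibre stability property and then assemble the global neighbourhood by a partition-of-unity argument. Fix $g\in\mathcal B_f(m,H)$; I want to produce $\e\in C(X,(0,1])$ with $B_\rho(g,\e)\subset\mathcal B_f(m,H)$. The key step is the following local claim, which is the Bing analogue of \cite[Lemma 2.3]{mv}: for every $y\in H$ there exist a neighbourhood $V_y$ of $y$ in $Y$ and $\delta_y>0$ such that whenever $y'\in V_y$ and $g'\in C(X,M)$ satisfies $\rho(g'(x),g(x))<\delta_y$ for all $x\in f^{-1}(y')$, then $g'\in\mathcal B_f(m,y')$.

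I would prove this claim by contradiction, using Hausdorff limits in the hyperspace of a suitable compactum. Suppose it fails at some $y\in H$. Then I can choose $y_k\to y$ and maps $g_k$ with $\rho(g_k(x),g(x))<1/k$ on $f^{-1}(y_k)$ but $g_k\notin\mathcal B_f(m,y_k)$, witnessed by points $z_k\in M$ and continua $K_0^k,K_1^k\subset f^{-1}(y_k)\cap g_k^{-1}(z_k)$ with $K_0^k\cap K_1^k\neq\varnothing$, yet $K_0^k\nsubseteq B(K_1^k,1/m)$ and $K_1^k\nsubseteq B(K_0^k,1/m)$. The set $C=\{y\}\cup\{y_k:k\geq 1\}$ is compact, so $Z=f^{-1}(C)$ is compact because $f$ is perfect, and the hyperspace of subcontinua of $Z$ is compact in the Hausdorff metric. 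Passing to a subsequence I arrange $K_i^k\to K_i$ (Hausdorff), $z_k\to z$ (the $z_k$ lie within $1/k$ of the compactum $g(Z)$), a common point $p_k\in K_0^k\cap K_1^k\to p$, and witnessing points $a_k\in K_0^k$, $b_k\in K_1^k$ with $d(a_k,K_1^k)\geq 1/m$ and $d(b_k,K_0^k)\geq 1/m$. Each $K_i$ is then a subcontinuum of $Z$; since $f(x_k)=y_k\to y$ for points $x_k\in K_i^k$ approximating points of $K_i$, we get $K_i\subset f^{-1}(y)$, and the estimate $\rho(g_k(x_k),g(x_k))<1/k$ together with $g_k(K_i^k)=\{z_k\}$ forces $g(K_i)=\{z\}$, so $K_i\subset f^{-1}(y)\cap g^{-1}(z)$. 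Continuity of the Hausdorff metric gives $p\in K_0\cap K_1\neq\varnothing$, while the limit points $a\in K_0$, $b\in K_1$ satisfy $d(a,K_1)=\lim d(a_k,K_1^k)\geq 1/m$ and $d(b,K_0)\geq 1/m$. Hence $K_0\nsubseteq B(K_1,1/m)$ and $K_1\nsubseteq B(K_0,1/m)$, contradicting $g\in\mathcal B_f(m,y)$; this proves the claim.

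With the local claim available, I assemble $\e$. The family $\{V_y\}_{y\in H}$ covers the open set $W=\bigcup_{y\in H}V_y\supset H$; since $W$ is metric hence paracompact, choose a locally finite refinement $\{O_\lambda\}$ of $\{V_y\}$ with $O_\lambda\subset V_{c(\lambda)}$ and a subordinate partition of unity $\{\psi_\lambda\}$ with $\mathrm{supp}\,\psi_\lambda\subset O_\lambda$. Put $\theta=\sum_\lambda\psi_\lambda\,\delta_{c(\lambda)}\in C(W,(0,1])$. As a convex combination, $\theta(w)\leq\max\{\delta_{c(\lambda)}:\psi_\lambda(w)>0\}=\delta_{c(\lambda^*)}$ for some $\lambda^*$ with $w\in O_{\lambda^*}\subset V_{c(\lambda^*)}$; thus for each $w\in W$ there is $y_0\in H$ with $w\in V_{y_0}$ and $\theta(w)\leq\delta_{y_0}$. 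The function $(\theta\circ f)|f^{-1}(H)$ is continuous and positive on the closed set $f^{-1}(H)\subset X$, so it extends to some $\e\in C(X,(0,1])$ agreeing with $\theta\circ f$ on $f^{-1}(H)$ (only the values of $\e$ on the fibres over $H$ enter the argument). If $\rho(g'(x),g(x))<\e(x)$ for all $x\in X$, then for each $y\in H$, choosing $y_0$ as above for $w=y$, we have $\rho(g'(x),g(x))<\e(x)=\theta(y)\leq\delta_{y_0}$ for all $x\in f^{-1}(y)$ with $y\in V_{y_0}$, so the local claim yields $g'\in\mathcal B_f(m,y)$. Hence $B_\rho(g,\e)\subset\mathcal B_f(m,H)$ and $\mathcal B_f(m,H)$ is open.

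The main obstacle is the limiting argument in the local claim: one must guarantee, along a single subsequence, that the Hausdorff limits $K_0,K_1$ are nondegenerate subcontinua lying in one common fibre of both $f$ and $g$, that they still meet, and that the separation $\geq 1/m$ is preserved. The compactness furnished by perfectness of $f$ (making $f^{-1}(C)$ compact) and the standard stability of these four properties under Hausdorff convergence are precisely what make the contradiction go through; securing all of them at once is the delicate point, while the final gluing is routine.
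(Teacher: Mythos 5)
Your proof is correct and follows essentially the same route as the paper: the heart of both arguments is the identical local stability claim (the paper's Claim~2), established by the same contradiction argument using compactness of $f^{-1}(\{y_k\}\cup\{y\})$ and limits of the continua, common points, and separated witness points. The only difference is that the paper delegates the final assembly of a global $\e$ to a cited argument from an earlier work, whereas you carry it out explicitly with a partition of unity; that step is routine and your version of it is sound.
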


\begin{proof}
We need first the following claim:

\textit{Claim $2$. Let $g\in\mathcal B_f(m,y)$ for some $y\in Y$ and
$m\geq 1$. Then there exists a neighborhood $V_y$ of $y$ in $Y$ and
$\delta_y>0$ such that $y'\in V_y$ and $g'\in C(X,M)$ with
$\rho\big(g'(x),g(x)\big)<\delta_y$ for all $x\in f^{-1}(y')$ yields
$g'\in\mathcal B_f(m,y')$.}

Indeed, otherwise we can find a local base $\{V_k\}_{k \in
\mathbb{N}}$ of neighborhoods of $y$ in $Y$, points $y_k \in V_k$
and maps $g_k \in C(X,M)$ such that $\rho(g_k(x), g(x)) < 1/k$ for
all $x \in f^{-1}(y_k)$ but $g_k\not\in\mathcal B_f(m,y_k)$.
Consequently, for any $k$ there exist $z_k\in M$, two continua
$P_k^0, P_k^1\subset f^{-1}(y_k)\cap g_k^{-1}(z_k)$ having a common
point, and points $x_k^i\in P_k^i$ with $d(x_k^i,P_k^{1-i})\geq
1/m$, $i=0,1$. Since the set $f^{-1}(\{y_k\}_{k \in \mathbb{N}} \cup
\{y\})$ is compact and contains all $P_k^i$, we may assume that each
of the sequences $\{P_k^i\}$ and $\{x_k^i\}$, $i=0,1$, converge,
respectively, to a continuum $P^i\subset f^{-1}(y)$ and a point
$x^i\in P^i$. This implies that $\{z_k\}$ converges to a point $z\in
M$ with $P^i\subset f^{-1}(y)\cap g^{-1}(z)$. Moreover, we have
$P^0\cap P^1\neq\varnothing$ and $d(x^i,P^{1-i})\geq 1/m$ for any
$i=0,1$. The last conditions contradict the fact that $g\in\mathcal
B_f(m,y)$. This completes the proof of the claim.

Now, following the arguments from the proof of \cite[Proposition
2.3]{vv} and using Claim 2 instead of \cite[Lemma 2.2]{vv}, we can
establish that for any closed $H\subset Y$ the set  $\mathcal
B_f(m,H)$ is open in $C(X,M)$.
\end{proof}

The proof of next two lemmas is similar to the proof of Lemma 2.1
and Lemma 2.2, respectively.

\begin{lem}
Let $\pi\colon Z\to\I^k$, where $Z$ is a metric compactum and $k\geq
1$. Suppose $g_0\in C(Z,M)$ with $g_0\in\mathcal B_\pi(\mathbb
S^{k-1})$. Then for every $\epsilon>0$ there exists a map $g\in
C(Z,M)$ such that $g\in\mathcal B_\pi(\I^k)$, $g$ is
$\epsilon$-homotopic to $g_0$ and $g|\pi^{-1}(\mathbb
S^{k-1})=g_0|\pi^{-1}(\mathbb S^{k-1})$.
\end{lem}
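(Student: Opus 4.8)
The plan is to follow the template set by Lemma~\ref{compact}, adapting each ingredient from the Krasinkiewicz setting to the Bing setting. The skeleton of the argument is identical: the sets $\mathcal B_\pi(m,\I^k)$ are open in $C(Z,M)$ (this is the content of the openness lemma just proved, applied to the compact parameter space $\I^k$), and $\mathcal B_\pi(\I^k)=\bigcap_{m\geq 1}\mathcal B_\pi(m,\I^k)$ by the first lemma of this section. So I would first observe, as in the proof of Lemma~\ref{compact}, that since $M$ is an $ANR$ it suffices to produce a map $g\in\mathcal B_\pi(\I^k)$ that is merely $\epsilon$-close to $g_0$ and agrees with $g_0$ on $\Omega=\pi^{-1}(\mathbb S^{k-1})$; closeness then upgrades to $\epsilon$-homotopy automatically.

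The heart of the argument is the analogue of Claim~1: given $g$ with $g|\Omega=g_0|\Omega$, for every $m\geq 1$ and $\delta>0$ there is $h\in\mathcal B_\pi(m,\I^k)$ with $h|\Omega=g_0|\Omega$ and $h$ $\delta$-close to $g$. I would prove this exactly as before, substituting Claim~2 of this section for \cite[Lemma 2.3]{mv}. Fixing $m$, since $g|\Omega=g_0|\Omega$ we have $g\in\mathcal B_\pi(m,y)$ for each $y\in\mathbb S^{k-1}$; Claim~2 supplies a neighborhood $V_y$ and a tolerance $\delta_y>0$ so that any map which is $\delta_y$-close to $g$ over $\pi^{-1}(y')$ lies in $\mathcal B_\pi(m,y')$ for $y'\in V_y$. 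Cover $\mathbb S^{k-1}$ by finitely many $V_{y_i}$, set $V=\bigcup V_{y_i}$, $F=\I^k\setminus V$, and $\eta=\min\{\delta,\delta_{y_i}\}$. Because $M$ is a free space I can find $g_1\in\mathcal B_\pi(\I^k)$ that is $(\eta/8)$-close to $g$; then glue $g|\Omega$ with $g_1|\pi^{-1}(F)$ to get a map on the closed set $\Omega\cup\pi^{-1}(F)$ that is $(\eta/8)$-close to $g$ there, and extend it by the extension property of $(M,\rho)$ to an $\eta$-close map $h$ on $Z$. Then $h\in\mathcal B_\pi(m,y)$ for all $y\in\I^k$: for $y\in V$ this is forced by the choice of $V_{y_i},\delta_{y_i}$, and for $y\in F$ it holds because $h$ agrees with $g_1\in\mathcal B_\pi(\I^k)$ over $\pi^{-1}(F)$.

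With the claim in hand, the final assembly is the standard Baire-category induction, copied verbatim from the end of Lemma~\ref{compact}: enumerate the open sets $\mathcal B_\pi(m,\I^k)$ as $\{T_j\}$, build positive numbers $\epsilon_j$ with $\epsilon_0=\epsilon/3$, $\epsilon_{j+1}\leq\epsilon_j/2$, and maps $g_j$ with $g_{j+1}\in B_\rho(g_j,\epsilon_j)\cap T_{j+1}$, $B_\rho(g_{j+1},3\epsilon_{j+1})\subset T_{j+1}$, and $g_{j+1}|\Omega=g_0|\Omega$. The uniform limit $g$ lies in every $T_j$, agrees with $g_0$ on $\Omega$, and is $\epsilon$-close to $g_0$, hence $g\in\mathcal B_\pi(\I^k)$ as required.

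I do not expect a genuine obstacle here, since the proof is structurally parallel to Lemma~\ref{compact}; the only point deserving care is making sure that the gluing step respects the Bing condition rather than the Krasinkiewicz condition. Concretely, one must check that Claim~2 is the correct drop-in replacement for \cite[Lemma 2.3]{mv} — that is, that the local-stability statement it provides (closeness over a single fiber forces membership in $\mathcal B_\pi(m,y')$ for nearby $y'$) is exactly what the covering-and-extension argument consumes. Since Claim~2 was established in precisely this form, the substitution is clean, and the remaining verifications are routine.
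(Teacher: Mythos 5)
Your proposal is correct and follows exactly the route the paper intends: the paper gives no separate proof of this lemma, stating only that it is "similar to the proof of Lemma 2.1," and your argument is precisely that adaptation, with Claim 2 replacing \cite[Lemma 2.3]{mv}, the free-space property of $M$ replacing the Krasinkiewicz property, and the same gluing, extension, and Baire-category induction. No gaps.
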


\begin{lem}
Let $N,L$ be simplicial complexes and $f\colon N\to L$ a perfect
$PL$-map. Then $\mathcal B_f(L)$ is dense in $C(N,M)$.
\end{lem}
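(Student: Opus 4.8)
The plan is to run the proof of Lemma~\ref{simcomplex} almost verbatim, replacing the Krasinkiewicz sets $\mathcal K_f$ by the Bing sets $\mathcal B_f$ and replacing Lemma~\ref{compact} by its Bing analogue just established in the preceding lemma. Following the template of \cite[Lemma 11.3]{bv}, I fix $g\in C(N,M)$ and $\alpha\in C(N,(0,1])$ and seek $h\in\mathcal B_f(L)$ that is $\alpha$-close to $g$. Writing $L^{(i)}$ for the $i$-dimensional skeleton of $L$, with $L^{(-1)}=\varnothing$ and $h_{-1}=g$, I would construct inductively maps $h_i\colon N\to M$ satisfying $h_i|f^{-1}(L^{(i-1)})=h_{i-1}|f^{-1}(L^{(i-1)})$, with $h_i$ being $\frac{\alpha}{2^{i+2}}$-homotopic to $h_{i-1}$, and with $h_i\in\mathcal B_f(L^{(i)})$.

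The inductive step is where the Bing compact lemma enters. Given $h_{i-1}\in\mathcal B_f(L^{(i-1)})$, I decompose $L^{(i)}\setminus L^{(i-1)}=\sqcup_{j\in J_i}\overset{\circ}\sigma_j$ into open $i$-simplexes. For each $\sigma_j$, the restriction of $f$ gives a map $\pi\colon f^{-1}(\sigma_j)\to\sigma_j\cong\I^i$, and since $f$ is perfect and $\sigma_j$ is compact, $Z=f^{-1}(\sigma_j)$ is a metric compactum, so the preceding lemma applies even though $N$ and $L$ need not be metrizable. The boundary condition it requires, namely $h_{i-1}|f^{-1}(\sigma_j)\in\mathcal B_\pi(\mathbb S^{i-1})$, is supplied by the inductive hypothesis because $\partial\sigma_j\subset L^{(i-1)}$. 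Thus I obtain $g_j\colon f^{-1}(\sigma_j)\to M$ agreeing with $h_{i-1}$ over $f^{-1}(\sigma_j^{(i-1)})$, being $\frac{\alpha}{2^{i+2}}$-homotopic to $h_{i-1}$, and having all fibers $g_j|f^{-1}(y)$, $y\in\sigma_j$, Bing. Gluing these with $h_{i-1}$ defines $\varphi_i\colon f^{-1}(L^{(i)})\to M$, which is $\frac{\alpha}{2^{i+2}}$-homotopic to $h_{i-1}|f^{-1}(L^{(i)})$; the Homotopy Extension Theorem then yields the desired $h_i\colon N\to M$.

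Finally, the homotopy-closeness estimates make the sequence uniformly Cauchy, so $h=\lim_{i\to\infty}h_i$ exists, is continuous, and is $\alpha$-close to $g$ (the bound following from $\sum_i\frac{\alpha}{2^{i+2}}<\alpha$). The compatibility condition $h_{i+1}|f^{-1}(L^{(i)})=h_i|f^{-1}(L^{(i)})$ forces the sequence to stabilize on each $f^{-1}(L^{(i)})$, whence $h|f^{-1}(L^{(i)})=h_i|f^{-1}(L^{(i)})$ and therefore $h\in\mathcal B_f(L^{(i)})$ for every $i$. Since the Bing condition over a point $y$ depends only on the restriction to $f^{-1}(y)$, and $L=\bigcup_i L^{(i)}$, this gives $\mathcal B_f(L)=\bigcap_i\mathcal B_f(L^{(i)})$ and hence $h\in\mathcal B_f(L)$, proving density. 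I expect the only genuinely content-bearing point to be the local-to-global passage at each simplex: verifying that the preceding lemma applies with the boundary data inherited from $L^{(i-1)}$, and that gluing maps agreeing on fiber boundaries preserves the fiberwise Bing property; the uniform-convergence and homotopy-extension bookkeeping is routine and identical to the Krasinkiewicz case.
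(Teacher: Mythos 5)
Your proposal is correct and is essentially the paper's own argument: the paper proves this lemma by declaring it ``similar to the proof of Lemma 2.2'' (the Krasinkiewicz version, Lemma~\ref{simcomplex}), and your write-up is exactly that skeleton-induction adaptation, substituting the Bing compact lemma for Lemma~\ref{compact} and correctly noting why $f^{-1}(\sigma_j)$ is a metric compactum and why the boundary data come from the inductive hypothesis.
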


Now we can complete the proof of Theorem 1.2.
\begin{pro}
Suppose $f\colon X\to Y$ is a perfect surjection between metric
spaces. Then  $\mathcal B_f(Y)$ is a dense and $G_\delta$-subset of
$C(X,M)$.
\end{pro}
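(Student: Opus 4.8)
The plan is to mirror the proof of Proposition~\ref{general reg} essentially verbatim, replacing the Krasinkiewicz ingredients by their Bing counterparts. First I would reduce to a density statement: since $C(X,M)$ has the Baire property and, by Lemma 3.1, $\mathcal B_f(Y)=\bigcap_{m\geq 1}\mathcal B_f(m,Y)$ with each $\mathcal B_f(m,Y)$ open in $C(X,M)$ (Lemma 3.2), it suffices to show that every $\mathcal B_f(m,Y)$ is dense. So I fix $m\geq 1$, a map $g\in C(X,M)$ and $\epsilon\in C(X,(0,1])$, and look for $h\in\mathcal B_f(m,Y)$ with $\rho(g,h)<\epsilon$.

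To produce $h$ I would reuse, word for word, the factorization machinery of Proposition~\ref{general reg}: approximate $g$ by a simplicially factorizable map $g=g^D\circ g_D$ with $g_D\colon X\to D$; apply \cite[Lemma 8.1]{bv} to get a cover $\U$ of $X$ controlling maps into $D$; refine it to $\U_1$ with $\mesh\U_1<1/m$ and $\inf\{\epsilon(x):x\in U\}>0$ for $U\in\U_1$; use \cite[Theorem 6]{bv} to obtain a locally finite cover $\V$ of $Y$ with nerve $L$ and natural map $\beta\colon Y\to L$, together with a $\U_1$-map $\alpha\colon X\to K$ and a perfect $PL$-map $p\colon K\to L$ satisfying $\beta\circ f=p\circ\alpha$; then subdivide and pass to the subcomplex $N$ of all simplexes meeting $\overline{\alpha(X)}$, so that $p_N=p|N\colon N\to L$ is a perfect $PL$-map and $h'=\varphi\circ\alpha$ is $\epsilon/2$-close to $g$, where $\varphi=g^D\circ\varphi_D\colon N\to M$. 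I would then choose $\epsilon_1\colon N\to(0,1]$ with $\epsilon_1\circ\alpha\leq\epsilon$, and apply the Bing analogue of Lemma~\ref{simcomplex} (Lemma 3.4) to $p_N$ to obtain $\varphi_1\in\mathcal B_{p_N}(L)$ that is $\epsilon_1/2$-close to $\varphi$. Setting $h=\varphi_1\circ\alpha$, the same estimates as in Proposition~\ref{general reg} give that $h$ is $\epsilon$-close to $g$.

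The only genuinely new step is checking $h\in\mathcal B_f(m,Y)$, and here the Krasinkiewicz argument must be replaced by an indecomposability argument. Fix $y\in Y$, $z\in M$, and two continua $K_0,K_1\subset f^{-1}(y)\cap h^{-1}(z)$ with $K_0\cap K_1\neq\varnothing$. Then $\alpha(K_0)$ and $\alpha(K_1)$ are continua contained in the fiber $p_N^{-1}(\beta(y))\cap\varphi_1^{-1}(z)$ (using $\varphi_1\circ\alpha=h$ and $p_N\circ\alpha=\beta\circ f$), and they meet since $K_0\cap K_1\neq\varnothing$; hence their union $\alpha(K_0)\cup\alpha(K_1)$ is a subcontinuum of that fiber. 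Because $\varphi_1\in\mathcal B_{p_N}(L)$, the restriction $\varphi_1|p_N^{-1}(\beta(y))$ is a Bing map, so every subcontinuum of its fibers is indecomposable; in particular $\alpha(K_0)\cup\alpha(K_1)$ is indecomposable and therefore cannot be written as the union of two proper subcontinua. Thus $\alpha(K_0)\subset\alpha(K_1)$ or $\alpha(K_1)\subset\alpha(K_0)$; say the former. For each $x\in K_0$ there is then $x'\in K_1$ with $\alpha(x')=\alpha(x)$, and since $\alpha$ is a $\U_1$-map with $\mesh\U_1<1/m$ we have $\diam\alpha^{-1}(\alpha(x))<1/m$, whence $d(x,x')<1/m$ and $x\in B(K_1,1/m)$. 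This gives $K_0\subset B(K_1,1/m)$, so $h\in\mathcal B_f(m,y)$, and as $y$ was arbitrary, $h\in\mathcal B_f(m,Y)$.

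I expect the main obstacle to be purely this last verification, specifically the clean use of indecomposability: one must observe that $\alpha(K_0)\cup\alpha(K_1)$ is a single subcontinuum lying in one fiber of the Bing restriction $\varphi_1|p_N^{-1}(\beta(y))$, and then invoke the defining property of indecomposable continua to force the nesting $\alpha(K_0)\subset\alpha(K_1)$ or $\alpha(K_1)\subset\alpha(K_0)$. Everything else is a transcription of Proposition~\ref{general reg}, with Lemma~\ref{compact} and Lemma~\ref{simcomplex} replaced by their Bing versions; the estimate $\rho(g,h)<\epsilon$ and the openness/Baire reduction require no change.
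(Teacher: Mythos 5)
Your proposal matches the paper's proof essentially verbatim: the same Baire/openness reduction via Lemmas 3.1 and 3.2, the same reuse of the factorization construction from Proposition 2.3 with Lemma 3.4 supplying $\varphi_1\in\mathcal B_{p_N}(L)$, and the same final verification that $h=\varphi_1\circ\alpha\in\mathcal B_f(m,Y)$ by pushing the two continua forward under $\alpha$ into a single fiber of the Bing restriction and using the mesh bound on $\U_1$. Your explicit appeal to indecomposability of $\alpha(K_0)\cup\alpha(K_1)$ to force the nesting is exactly the step the paper compresses into ``since the fiber is a Bing space, one of these continua is contained in the other.''
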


\begin{proof}
Since $\mathcal B_f(Y)$ is the intersection of the open sets
$\mathcal B_f(m,Y)$, $m\geq 1$, it suffices to show that each
$\mathcal B_f(m,Y)$ is dense in $C(X,M)$. To this end, we repeat the
construction and the notations from the proof of Proposition 2.3.
Using Lemma 3.4 instead of Lemma 2.2, we obtain a map $\varphi_1\in
C(N,M)$ such that $\varphi_1\in\mathcal B_{p_N}(L)$ and $\varphi_1$
is $\epsilon/2$-close to $\varphi$. Let us show that the map
$h=\varphi_1\circ\alpha$ belongs to $\mathcal B_f(m,Y)$. We fix
$y\in Y$ and take two continua $P^0,P^1\subset f^{-1}(y)\cap
h^{-1}(z)$ for some $z\in M$ such that $P^0\cap P^1\neq\varnothing$.
Then $C^0=\alpha(P^0)$ and $C^1=\alpha(P^1)$ are non-empty continua
in $p_N^{-1}(\beta(y))\cap\varphi_1^{-1}(z)$ having a common point.
Since $p_N^{-1}(\beta(y))\cap\varphi_1^{-1}(z)$ is a Bing space, one
of these continua is contained in the other one. Assume that
$C^0\subset C^1$. Then $\alpha^{-1}(\alpha(x))\cap
P^1\neq\varnothing$ for all $x\in P^0$. Because all fibers of
$\alpha$ are of diameter $<1/m$ (recall that $mesh\U_1<1/m$), we
finally obtain that $P^0\subset B(P^1,1/m)$. Hence, $h\in\mathcal
B_f(m,y)$ which completes the proof.
\end{proof}

\section{Some mapping theorems for extensional dimension}

In this section we apply Theorem 1.2 to extend some results from
\cite{ll} which were established for maps between compact metric
spaces. Extension theory, which was first introduced by Dranishnikov
\cite{dr}, is based on the following notion. If $K$ is a
$CW$-complex we say that the extension dimension of a space $X$ does
not exceed $K$ (br., $\edim X\leq K$) provided $K$ is an absolute
extensor for $X$. For example, $\dim X\leq n$ if and only if
$\mathbb{S}^n\in AE(X)$. For a map $f\colon X\to Y$ we write $\edim
f\leq K$ provided $\edim f^{-1}(y)\leq K$ for all $y\in Y$.

We start with the following proposition established in \cite[Theorem
1.2]{l2} for maps between compact metric spaces.

\begin{pro}
Let $K$ be a $CW$-complex and $f\colon X\to Y$ be a perfect
surjection between metric spaces with $\edim f\leq K$. Then there
exists an $F_\sigma$-set $A\subset X$ such that $\edim A\leq K$ and
$\dim f^{-1}(y)\backslash A\leq 0$ for all $y\in Y$.
\end{pro}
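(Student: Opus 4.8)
The plan is to use Theorem 1.2 as the main engine: since $K$ is a $CW$-complex and $\edim f \le K$, we want to produce a map $g\colon X \to M$ into a suitable model space $M$ whose fibers, when restricted over each $f^{-1}(y)$, are ``nice'' in a way that lets us split $X$ into the desired $F_\sigma$-set $A$ of extensional dimension $\le K$ and a complementary piece that is zero-dimensional on every fiber. The natural model space $M$ here is a Bing-type free $ANR$ (for instance an interval or a one-dimensional locally connected continuum, which is a free space by the results quoted in the introduction), because Bing maps have fibers all of whose subcontinua are indecomposable, and this indecomposability is exactly the hereditary-disconnectedness property one exploits to get the $\dim 0$ conclusion on the complement.

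First I would invoke Theorem 1.2 (equivalently Proposition 3.5) to obtain, inside $C(X,M)$ with the source limitation topology, a dense $G_\delta$-set of maps $g$ such that every restriction $g|f^{-1}(y)$ is a Bing map. Fix one such $g$. The point of the Bing property is that for each $y$, the subcontinua of $f^{-1}(y)$ are severely restricted: any continuum in a fiber of $g|f^{-1}(y)$ is indecomposable, so the fibers $g^{-1}(z)\cap f^{-1}(y)$ carry at most a ``small'' amount of connectivity. I would then define $A$ to be the union, over a countable base, of the preimages under $g$ of the pieces where the extensional dimension is controlled — concretely, decompose the target $M$ (or a countable family of closed subsets covering the relevant part of $M$) so that $g^{-1}$ of each piece has $\edim \le K$, and take $A$ to be the resulting $F_\sigma$-set. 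The complement $f^{-1}(y)\setminus A$ should then meet each fiber of $g$ in a totally disconnected (hence zero-dimensional, by compactness) set, because the Bing property forbids nondegenerate connected fibers off the controlled part.

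The technical heart, and the step I expect to be the main obstacle, is transferring the compact-space argument of \cite{l2} (Theorem 1.2 there) to the parametric, non-compact setting. In \cite{l2} one has a single Bing or Krasinkiewicz map on a compactum and extracts the splitting by a dimension-theoretic decomposition of the target; here the same must be done \emph{uniformly over all fibers} $f^{-1}(y)$ simultaneously, and $X,Y$ are only metric, not compact. Since $f$ is perfect, each fiber $f^{-1}(y)$ is compact and metrizable, so the per-fiber statement of \cite[Theorem 1.2]{l2} applies to $g|f^{-1}(y)$; the difficulty is choosing the $F_\sigma$-set $A$ \emph{globally} on $X$ in a way that restricts correctly over every $y$. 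I would handle this by building $A$ directly from $g$ and a fixed countable structure on $M$ (independent of $y$), so that the two required properties, $\edim A \le K$ and $\dim(f^{-1}(y)\setminus A)\le 0$, follow from the corresponding properties fiberwise together with the countable-sum theorem for extensional dimension and the fact that a set meeting each compact fiber in a zero-dimensional set, with a perfect projection, is itself zero-dimensional on fibers.

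Finally I would assemble the pieces: verify that $A$ is genuinely $F_\sigma$ (as a countable union of sets built from closed preimages under the continuous map $g$), confirm $\edim A \le K$ via the countable-sum / monotonicity properties of extensional dimension applied to the controlled preimages, and confirm $\dim\bigl(f^{-1}(y)\setminus A\bigr)\le 0$ for every $y$ by reducing to the compact fiber and applying the Bing-map splitting from \cite{l2}. The cleanest route is to quote \cite[Theorem 1.2]{l2} as a black box on each compact fiber and let Theorem 1.2 of the present paper supply the single global map $g$ that makes all these fiberwise applications coherent.
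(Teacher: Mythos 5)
Your approach does not match the paper's, and it has a genuine gap at its core. Proposition 4.1 is \emph{not} proved via Theorem 1.2: the Bing-map machinery enters only later, in Theorem 4.2, and even there only after replacing $X$ by $\tilde{X}=X\times\I$, where the relevant fact is that a Bing space contains no nondegenerate \emph{interval} (so the projection of $(\tilde{f}\triangle g)^{-1}(y,t)$ onto $f^{-1}(y)$ is $0$-dimensional). For the present proposition your key heuristic --- ``the Bing property forbids nondegenerate connected fibers off the controlled part'' --- is false: the fibers of a Bing map are hereditarily indecomposable compacta, which can be nondegenerate and of arbitrarily high (even infinite) dimension, e.g.\ pseudo-arcs. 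If $g^{-1}(z)\cap f^{-1}(y)$ contains such a continuum $P$, then for any set $A$ pulled back from a ``countable structure on $M$'' the difference $P\setminus A$ is either empty or all of $P$, so you cannot get $\dim f^{-1}(y)\setminus A\leq 0$ without putting essentially all such continua into $A$, and nothing in your construction controls $\edim$ of that. A second gap is that $\edim$ of preimages under $g$ alone is not controlled by $\edim f\leq K$: the fibers of $g$ are not contained in fibers of $f$, so \cite[Corollary 2.5]{cv} does not apply to $g$; one must work with a diagonal map with $f$.

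The paper's actual argument is more elementary and bypasses Bing maps entirely. By Pasynkov's theorem \cite{bp} there is a single map $g\colon X\to Q=\I^{\omega}$ embedding every fiber $f^{-1}(y)$; write $g=\triangle_{i}g_i$ and set $h_i=f\triangle g_i\colon X\to Y\times\I$, whose fibers are closed in fibers of $f$ and hence have $\edim\leq K$. Fix countably many closed intervals $\I_j$ so that every open subset of $\I$ contains some $\I_j$, and use \cite[Lemma 4.1]{tv} to choose $0$-dimensional $F_\sigma$-sets $C_j\subset Y\times\I_j$ meeting every slice $\{y\}\times\I_j$; this lemma is precisely the ``uniform over all $y$'' selection whose absence you correctly identified as the main obstacle but did not supply. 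Then $A=\bigcup_{i,j}h_i^{-1}(C_j)$ is $F_\sigma$, has $\edim A\leq K$ by \cite[Corollary 2.5]{cv} and the countable sum theorem, and if $\dim f^{-1}(y_0)\setminus A>0$ then, since $g$ embeds $f^{-1}(y_0)$, some $g_i(f^{-1}(y_0)\setminus A)$ has nonempty interior in $\I$, hence contains some $\I_j$ and therefore meets $C_j$ over $y_0$ --- contradicting $A\supset h_i^{-1}(C_j)$. Quoting \cite[Theorem 1.2]{l2} fiberwise, as you propose, gives incoherent sets $A_y\subset f^{-1}(y)$ and does not produce a global $F_\sigma$-set; the globalization \emph{is} the content of the proposition.
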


\begin{proof}
According to \cite{bp}, there exists a map $g\colon X\to Q$, where
$Q=\I^\omega$ is the Hilbert cube, such that $g$ embeds every fiber
$f^{-1}(y)$, $y\in Y$. Let $g=\triangle_{i=1}^{\infty}g_i$ and
$h_i=f\triangle g_i\colon X\to Y\times\I$, $i\geq 1$. Moreover, we
choose countably many closed intervals $\I_j$ such that every open
subset of $\I$ contains some $\I_j$. By \cite[Lemma 4.1]{tv}, for
every $j$ there exists a $0$-dimensional $F_\sigma$-set $C_j\subset
Y\times\I_j$ such that $C_j\cap (\{y\}\times\I_j)\neq\varnothing$
for every $y\in Y$.  Now, consider the sets $A_{ij}=h_i^{-1}(C_j)$
for all $i,j\geq 1$ and let $A$ be their union. Since $\edim f\leq
K$, $\edim h_i\leq K$ for any $i$. Hence, according to
\cite[Corollary 2.5]{cv}, $\edim A_{ij}\leq K$ for all $i,j$. This
implies $\edim A\leq K$.

It remains to show that $\dim f^{-1}(y)\backslash A\leq 0$ for every
$y\in Y$. Suppose $\dim f^{-1}(y_0)\backslash A>0$ for some $y_0$.
Since $g|f^{-1}(y_0)$ is an embedding, there exists $i$ such that
$\dim g_i(f^{-1}(y_0)\backslash A)>0$. Then
$g_i(f^{-1}(y_0)\backslash A)$  has a non-empty interior in $\I$.
So, $g_i(f^{-1}(y_0)\backslash A)$ contains some $\I_j$. Choose
$t_0\in\I_j$ with $c_0=(y_0,t_0)\in C_j$. Then there exists $x_0\in
f^{-1}(y_0)\backslash A$ such that $g_i(x_0)=t_0$. On the other
hand, $x_0\in h_i^{-1}(c_0)\subset A_{ij}\subset A$, a
contradiction.
\end{proof}

For a set $A\subset X$ we write $\rdim_X A\leq n$ provided $\dim
H\leq n$ for every closed subset $H$ of $X$ which is contained in
$A$. Next theorem is an analogue of Theorem 1.9 from \cite{ll}.

\begin{thm}
Let $K$ be a $CW$-complex and $f\colon X\to Y$ a perfect map. Let
$\tilde{X}=X\times\I$ and define $\tilde{f}\colon\tilde{X}\to Y$ by
$\tilde{f}(x,t)=f(x)$. Consider the following properties:

\begin{enumerate}
\item[(1)] $\edim f\leq K$;
\item[(2)] Almost every map $g\colon\tilde{X}\to\I$ is such that
$\edim\tilde{f}\triangle g\leq K$;
\item[(3)] There exists an $F_\sigma$-subset $A$ of $\tilde{X}$ such
that $\edim A\leq K$ and
$\displaystyle\rdim_{\tilde{X}}\tilde{f}^{-1}(y)\backslash A\leq 0$
for every $y\in Y$.
\item[(4)] There exists a $G_\delta$-subset $B$ of $\tilde{X}$ such
that $\edim B\leq K$ and $dim\tilde{f}^{-1}(y)\backslash B\leq 0$
for every $y\in Y$.
\end{enumerate}

Then $(1)\Rightarrow (2)\Rightarrow (3)$. Moreover, if $K$ is
countable, then $(1)\Rightarrow (2)\Rightarrow (3)\Rightarrow (4)$.
\end{thm}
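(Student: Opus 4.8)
The plan is to establish the two unconditional implications for an arbitrary $CW$-complex $K$, and then add the final one under the countability hypothesis, using Theorem 1.2 to manufacture the generic parameter maps and Proposition 4.1 as the parametric decomposition device. Throughout I use that $\tilde f$ is again a perfect surjection (it factors as $\tilde f=f\circ\pi$ with $\pi\colon\tilde X\to X$ the perfect projection) and that $\tilde f^{-1}(y)=f^{-1}(y)\times\I$ is a compactum with $\edim f^{-1}(y)\le K$.

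\emph{Proof of $(1)\Rightarrow(2)$.} Since $\I$ is a complete $ANR$ free space, I would apply Theorem 1.2 to the perfect surjection $\tilde f$ to obtain a dense $G_\delta$-subset $\mathcal G\subset C(\tilde X,\I)$ whose members $g$ have the property that every restriction $g|\tilde f^{-1}(y)$ is a Bing map. Fixing $g\in\mathcal G$ and a point $(y,s)$, I then observe that $(\tilde f\triangle g)^{-1}(y,s)=(g|\tilde f^{-1}(y))^{-1}(s)$ is a fiber of a Bing map defined on the compactum $f^{-1}(y)\times\I$. The compact extension-dimension property of Bing maps established in \cite{ll}—that a Bing map on a product $Z\times\I$ with $\edim Z\le K$ has all its fibers of extension dimension $\le K$—then yields $\edim(\tilde f\triangle g)^{-1}(y,s)\le K$. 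Since this holds for every $(y,s)$, each $g\in\mathcal G$ satisfies $\edim(\tilde f\triangle g)\le K$, so almost every map has the required property. The only nonroutine ingredient here is the cited compact lemma; everything else is the fiberwise reduction made legitimate by the perfectness of $\tilde f$.

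\emph{Proof of $(2)\Rightarrow(3)$, and the main obstacle.} Invoking (2) together with Theorem 1.2, I would fix a single $g$ for which $h:=\tilde f\triangle g\colon\tilde X\to Y\times\I$ satisfies $\edim h\le K$ and, at the same time, every $g|\tilde f^{-1}(y)$ is a Bing map (each condition holds on a dense $G_\delta$, so their intersection is nonempty in the Baire space $C(\tilde X,\I)$). Applying Proposition~4.1 to the perfect surjection $h$ produces an $F_\sigma$-set $A\subset\tilde X$ with $\edim A\le K$ and $\dim\big(h^{-1}(y,s)\setminus A\big)\le 0$ for all $(y,s)$. I claim this same $A$ witnesses (3). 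Indeed, let $H\subset\tilde X$ be closed with $H\subset\tilde f^{-1}(y)\setminus A$; then $H$ is compact, so it suffices to show it contains no nondegenerate continuum. The crux is this: Proposition~4.1 only controls continua lying inside a single fiber of $h$—if $C\subset H$ is a continuum with $g(C)$ a single point, then $C\subset h^{-1}(y,s)\setminus A$ is $0$-dimensional and hence degenerate—but $\rdim$ demands that we also exclude continua \emph{transverse} to $g$, i.e. with $g(C)$ nondegenerate. Ruling these out is the technical heart of the argument, and it is exactly where the hereditary indecomposability of the fibers of the Bing map $g|\tilde f^{-1}(y)$ must be played against the punctiformity of the $h$-fibers; this is the analogue of the corresponding compact step in \cite{ll}, and I expect it to be the hardest part to carry out in detail.

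\emph{Proof of $(3)\Rightarrow(4)$ for countable $K$.} Here I would upgrade the $F_\sigma$-description with a merely $\rdim$-small leftover to a $G_\delta$-description with a genuinely $0$-dimensional leftover. Starting from the set $A$ of (3), the condition $\rdim_{\tilde X}(\tilde f^{-1}(y)\setminus A)\le 0$ permits, by a standard decomposition-and-enlargement argument performed simultaneously over all $y$ and in an $F_\sigma$ manner, splitting the leftover as $E\cup L$ with $L$ an $F_\sigma$-set satisfying $\dim\big(L\cap\tilde f^{-1}(y)\big)\le 0$ and with $\dim E\le 0$. Setting $B=\tilde X\setminus L$ gives a $G_\delta$-set whose fiberwise complement is the $0$-dimensional set $L\cap\tilde f^{-1}(y)$, so the dimensional clause of (4) holds. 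It remains to check $\edim B=\edim(A\cup E)\le K$, and this follows from the addition theorem for extension dimension. It is precisely at this final step that countability of $K$ is indispensable: the addition theorem ``$\edim(A\cup E)\le K$ whenever $\edim A\le K$ and $\dim E\le 0$'' is available for countable $K$ but may fail otherwise, which is why $(4)$ is asserted only in that case.
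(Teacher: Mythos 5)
Your $(1)\Rightarrow(2)$ is essentially the paper's argument; the ``cited compact lemma'' you lean on is in fact a two--line observation (a Bing space inside $f^{-1}(y)\times\I$ meets each segment $\{x\}\times\I$ in a set containing no nondegenerate interval, hence the projection onto $f^{-1}(y)$ is a $0$-dimensional map and the Dranishnikov--Uspenskij theorem gives $\edim\leq K$). The real problem is $(2)\Rightarrow(3)$, where the gap you flag is genuine and your proposed repair cannot work: the set $A$ produced by Proposition 4.1 applied to $h=\tilde f\triangle g$ controls only continua lying in a single fiber of $h$, and the Bing property of $g|\tilde f^{-1}(y)$ is of no help against a continuum $T$ with $g(T)$ nondegenerate, because a Bing map constrains only subcontinua of its \emph{fibers} and such a $T$ lies in no fiber. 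Indeed the paper never uses the Bing property in this implication (so there is also no need to intersect two dense $G_\delta$'s). The missing idea is to enlarge $A$: list the rational subintervals $\I_j\subset\I$, choose for each $j$ a $0$-dimensional $F_\sigma$-set $C_j\subset Y\times\I_j$ meeting every slice $\{y\}\times\I_j$ (\cite[Lemma 4.1]{tv}), and set $A=A_1\cup A_2$ where $A_1=h^{-1}\big(\bigcup_jC_j\big)$ and $A_2$ is the set from Proposition 4.1. Since $\dim\bigcup_jC_j=0$ and $\edim h\leq K$, \cite[Corollary 2.5]{cv} gives $\edim A_1\leq K$. Now a nondegenerate continuum $T$ in a closed $H\subset\tilde f^{-1}(y_0)\setminus A$ either has $g(T)$ degenerate, contradicting $\dim h^{-1}(y_0,t)\setminus A_2\leq 0$, or $g(T)$ contains some $\I_j$, so $h(T)=\{y_0\}\times g(T)$ meets $C_j$ and $T$ meets $A_1$; either way a contradiction.

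The step $(3)\Rightarrow(4)$ is also broken as written. The ``addition theorem'' you invoke --- $\edim(A\cup E)\leq K$ whenever $\edim A\leq K$ and $\dim E\leq 0$ --- is false already for $K=\mathbb S^0$ (two $0$-dimensional subsets of the line can union to a $1$-dimensional set); the correct union theorem only yields the join bound $\edim(A\cup E)\leq K\ast\mathbb S^0$, i.e.\ the suspension of $K$, not $K$. The paper's route needs no union theorem: since $K$ is countable, every metric space of extension dimension $\leq K$ has a completion of the same extension dimension (Levin), which lets one enlarge $A$ to a $G_\delta$-set $B\supset A$ with $\edim B\leq K$. Then $\tilde f^{-1}(y)\setminus B$ is an $F_\sigma$-set contained in $\tilde f^{-1}(y)\setminus A$, hence a countable union of closed subsets of $\tilde X$ each of dimension $\leq 0$ by the $\rdim$ hypothesis, and the countable sum theorem gives $\dim\tilde f^{-1}(y)\setminus B\leq 0$. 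This also explains where countability of $K$ is really used: in the completion (enlargement) theorem, not in any addition theorem.
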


\begin{proof}
To proof the implication $(1)\Rightarrow (2)$, take a map
$g\colon\tilde{X}\to\I$ such that all restrictions
$g|\tilde{f}^{-1}(y)$, $y\in Y$, are Bing maps. So, any fiber
$(\tilde{f}\triangle g)^{-1}(y,t)=g^{-1}(t)\cap(f^{-1}(y)\times\I)$,
$t\in\I$, $y\in Y$, is a Bing space. So, it does not contain a
non-degenerate interval. This implies that the projection of
$(\tilde{f}\triangle g)^{-1}(y,t)$ onto $f^{-1}(y)$ is a
$0$-dimensional map. Then, by \cite[Theorem 1.2]{du},
$\edim\tilde{f}\triangle g\leq\edim f^{-1}(y)\leq K$. Since $\I$ is
a free space \cite{lev}, by Theorem 1.2, almost every map $g\in
C(\tilde{X},\I)$ is such that the fibers of $\tilde{f}\triangle g$
are Bing spaces. The implication $(1)\Rightarrow (2)$ is
established.

For the implication $(2)\Rightarrow (3)$, we fix a map
$g\colon\tilde{X}\to\I$ such that $\edim\tilde{f}\triangle g\leq K$
and consider the family of all subintervals $\I_j$ of $\I$ with
rational end-points. As in the proof of Theorem 4.1, for every $j$
there exists a $0$-dimensional $F_\sigma$-set $C_j\subset
Y\times\I_j$ such that $(\{y\}\times\I_j)\cap C_j\neq\varnothing$
for every $y\in Y$. Let $C=\bigcup_{j\geq 1}C_j$ and
$A_1=(\tilde{f}\triangle g)^{-1}(C)$. Since $\dim C=0$, it follows
from \cite[Corollary 2.5]{cv} that $\edim A_1\leq K$. On the other
hand, by Theorem 4.1, there exists an $F_\sigma$-subset
$A_2\subset\tilde{X}$ such that $\edim A_2\leq K$ and $\dim
(\tilde{f}\triangle g)^{-1}(y,t)\backslash A_2\leq 0$ for all
$(y,t)\in Y\times\I$. Then $A=A_1\cup A_2$ is an $F_\sigma$-set in
$\tilde{X}$ and $\edim A\leq K$. It remains to show that
$\rdim_{\tilde{X}}\tilde{f}^{-1}(y)\backslash A\leq 0$ for every
$y\in Y$. So, we fix $y_0\in Y$ and a closed set $H\subset\tilde{X}$
which is contained in $\tilde{f}^{-1}(y_0)\backslash A$. Suppose
$\dim H>0$. Then $H$ contains a non-trivial component of
connectedness $T$ (recall that $H$ is compact because so is
$\tilde{f}^{-1}(y_0)$). Since  $(\tilde{f}\triangle
g)^{-1}(y_0,t)\backslash A$ is $0$-dimensional for all $t\in\I$,
$g(T)$ is a non-trivial subinterval of $\I$. So, $g(T)$ contains
some $\I_j$. Consequently,  $T\cap (\tilde{f}\triangle
g)^{-1}(C_j)\neq\varnothing$ which is a contradiction. Hence,
$\rdim_{\tilde{X}}\tilde{f}^{-1}(y_0)\backslash A\leq 0$.

Finally, let us show the implication $(3)\Rightarrow (4)$ provided
$K$ is countable. Suppose $A\subset\tilde{X}$ is an $F_\sigma$-set
such that $\edim A\leq K$ and
$\displaystyle\rdim_{\tilde{X}}\tilde{f}^{-1}(y)\backslash A\leq 0$
for every $y\in Y$. Since $K$ is countable, every metric space of
extension dimension $\leq K$ has a completion with the same
dimension \cite{l1}. This fact easily implies that $A$ can be
enlarged to a $G_\delta$-set $B\subset\tilde{X}$ with $\edim B\leq
K$. Then $\tilde{f}^{-1}(y)\backslash B$ is an $F_\sigma$-subset of
$\tilde{X}$ which is contained in $\tilde{f}^{-1}(y)\backslash A$,
$y\in Y$. Hence, $\dim\tilde{f}^{-1}(y)\backslash B\leq 0$ because
$\rdim_{\tilde{X}}\tilde{f}^{-1}(y)\backslash A\leq 0$.
\end{proof}

For a subset $B\subset X$ and a $CW$-complex $L$ we write $\erdim_X
B\leq L$ provided $\edim F\leq L$ for any closed set $F$ in $X$
which is contained in $B$. Our next result extends Theorem 1.5 from
\cite{ll}.

\begin{thm}
Let $K,L$ be two $CW$-complexes and $f\colon X\to Y$ a perfect
surjection such that $\edim f\leq K$ and $\edim Y\leq L$. Then there
exists a decomposition  $\tilde{X}=X\times\I=A\cup B$ of $\tilde{X}$
such that $\edim A\leq K$ and $\erdim_{\tilde{X}} B\leq L$. If in
addition, $K$ is countable, $B$ can be chosen so that $\edim B\leq
L$. In such a case $\edim\tilde{X}\leq K*L$.
\end{thm}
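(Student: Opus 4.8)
The plan is to take the set $A$ straight from Theorem 4.2 and let $B$ be its complement. Since $\edim f\le K$, the implication $(1)\Rightarrow(3)$ of Theorem 4.2 furnishes an $F_\sigma$-set $A\subset\tilde X$ with $\edim A\le K$ and $\rdim_{\tilde X}\tilde f^{-1}(y)\backslash A\le 0$ for every $y\in Y$. I put $B=\tilde X\backslash A$, which is a $G_\delta$-set; then $\tilde X=A\cup B$ and $\edim A\le K$ are immediate, so the first assertion reduces to proving $\erdim_{\tilde X}B\le L$.

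The heart of the argument is this relative bound. I fix a closed set $F\subset\tilde X$ with $F\subset B$ and aim at $\edim F\le L$. For each $y\in Y$ the slice $F\cap\tilde f^{-1}(y)$ is closed in $\tilde X$ and contained in $\tilde f^{-1}(y)\backslash A$; hence the condition $\rdim_{\tilde X}\tilde f^{-1}(y)\backslash A\le 0$ forces $\dim\big(F\cap\tilde f^{-1}(y)\big)\le 0$. Consequently the restriction $\tilde f|F\colon F\to Y$ is a perfect map all of whose fibers are at most $0$-dimensional. Since $\edim Y\le L$, the $0$-dimensional mapping theorem for extension dimension \cite[Theorem 1.2]{du}, applied to $\tilde f|F$ exactly as in the proof of Theorem 4.2 and combined with the monotonicity of $\edim$ with respect to subspaces, yields $\edim F\le\edim Y\le L$. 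As $F$ was an arbitrary closed subset of $\tilde X$ lying in $B$, this gives $\erdim_{\tilde X}B\le L$.

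For the refinement when $K$ is countable, I would upgrade $A$ by the extra implication $(3)\Rightarrow(4)$ of Theorem 4.2, which is available precisely because $K$ is countable. This produces a $G_\delta$-set $\hat A\supset A$ with $\edim\hat A\le K$ and $\dim\tilde f^{-1}(y)\backslash\hat A\le 0$ for all $y$. I then set $\hat B=\tilde X\backslash\hat A$, an $F_\sigma$-set which, because $A\subset\hat A$, satisfies $\hat B\subset B$. Writing $\hat B=\bigcup_{n}F_n$ with each $F_n$ closed in $\tilde X$, every $F_n$ is a closed subset of $\tilde X$ contained in $B$, so $\edim F_n\le L$ by the inequality $\erdim_{\tilde X}B\le L$ already obtained. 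The countable closed sum theorem for extension dimension then gives $\edim\hat B\le L$. Renaming $\hat A,\hat B$ as $A,B$, I get the decomposition with $\edim A\le K$ and $\edim B\le L$, whence $\edim\tilde X\le K*L$ follows from the union (join) theorem of extension theory applied to $\tilde X=A\cup B$.

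The step I expect to be the main obstacle is the second paragraph: recasting the relative extension dimension $\erdim_{\tilde X}B$ as a statement about the $0$-dimensional perfect map $\tilde f|F$ and then correctly invoking \cite[Theorem 1.2]{du} together with subspace-monotonicity of $\edim$. The other delicate point is the bookkeeping between closed and $F_\sigma$ pieces in the countable case, namely ensuring $\hat B\subset B$ so that the sum theorem can be fed the bound $\edim F_n\le L$; this is routine once the relative bound is established.
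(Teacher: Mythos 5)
Your proposal is correct and follows essentially the same route as the paper: take $A$ from Theorem 4.2(3), observe that $\tilde f$ restricted to any closed $F\subset B$ is a perfect $0$-dimensional map and conclude $\edim F\le L$, then in the countable case upgrade to the $G_\delta$-set of 4.2(4), decompose its complement into closed pieces, and finish with the countable sum and join theorems. The only quibble is the citation for the key step: the $0$-dimensional mapping theorem you need for the non-compact set $F$ is \cite[Corollary 2.5]{cv} (the version for perfect maps of metric spaces), not \cite[Theorem 1.2]{du}, which is the compact-fiber statement used elsewhere in Theorem 4.2.
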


\begin{proof}
According to Theorem 4.2(3), there exists an $F_\sigma$-set
$A\subset\tilde{X}$ with $\edim A\leq K$ and
$\displaystyle\rdim_{\tilde{X}}\tilde{f}^{-1}(y)\cap B\leq 0$ for
every $y\in Y$,  where $B=\tilde{X}\backslash A$. Then, $f|F$ is a
perfect $0$-dimensional map for every closed set $F\subset\tilde{X}$
which is contained in $B$. Hence, by \cite[Corollary 2.5]{cv},
$\edim F\leq L$. So, $\erdim_{\tilde{X}} B\leq L$.

If $K$ is countable, we apply Theorem 4.2(4) to find a
$G_\delta$-set $A\subset\tilde{X}$ such that $\edim A\leq K$ and
$\tilde{f}|B$ being $0$-dimensional with $B=\tilde{X}\backslash A$.
Let $B=\bigcup_{i\geq 1}B_i$ with each $B_i\subset\tilde{X}$ closed.
Then all restrictions $\tilde{f}|B_i$, $i\geq 1$, are
$0$-dimensional perfect maps. So, applying again \cite[Corollary
2.5]{cv} we obtain $\edim B_i\leq L$, $i\geq 1$. Hence, $\edim B\leq
L$. Finally, according to \cite[Theorem A]{dy}, $\edim\tilde{X}\leq
K*L$.
\end{proof}

Let us also mention the following corollary of Theorem 4.3.

\begin{thm}
Let $K,L$ be connected $CW$-complexes and $f\colon X\to Y$ a perfect
surjection such that $\edim f\leq K$ and $\edim Y\leq L$. If $X$ is
finite-dimensional and $K$ is countable, then $\edim X\leq
K\bigwedge L$.
\end{thm}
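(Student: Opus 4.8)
The plan is to obtain this as a consequence of Theorem 4.3, combined with the homotopy invariance of extension dimension and a suspension theorem for finite-dimensional spaces.

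First I would apply Theorem 4.3. All of its hypotheses are in force: $f\colon X\to Y$ is a perfect surjection, $\edim f\leq K$, $\edim Y\leq L$, and $K$ is countable. Hence the final assertion of Theorem 4.3 gives $\edim\tilde{X}\leq K*L$, where $\tilde{X}=X\times\I$.

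Next I would replace the join by a suspension of the smash product. For connected $CW$-complexes $K$ and $L$ there is the classical homotopy equivalence $K*L\simeq\Sigma(K\bigwedge L)$, and $K\bigwedge L$ is itself a connected $CW$-complex (the smash product of connected complexes is connected). Since $\edim$ depends only on the homotopy type of the target complex, the bound $\edim\tilde{X}\leq K*L$ becomes $\edim(X\times\I)\leq\Sigma(K\bigwedge L)$.

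Finally I would descend from $X\times\I$ to $X$ using the suspension theorem of extension theory: for a finite-dimensional metric space $X$ and a connected $CW$-complex $G$, the inequality $\edim(X\times\I)\leq\Sigma G$ implies $\edim X\leq G$. Applying this with $G=K\bigwedge L$ (which is legitimate because $X$, and therefore $X\times\I$, is finite-dimensional) yields $\edim X\leq K\bigwedge L$, as desired. The main obstacle is precisely this last step: one must invoke the suspension theorem in the correct form and verify its hypotheses, since this is the only point at which finite-dimensionality of $X$ and connectedness of $G$ are actually used. The sphere case ($G=\mathbb{S}^{n}$, $\Sigma G=\mathbb{S}^{n+1}$, $\dim(X\times\I)=\dim X+1$) is a useful sanity check on the direction of the implication. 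The join--smash equivalence and the homotopy invariance of $\edim$ are routine, but they must be set up so that the target $K\bigwedge L$ is connected, which is why both $K$ and $L$ are assumed connected in the statement.
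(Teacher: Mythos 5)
Your proposal is correct and follows essentially the same route as the paper, which simply repeats the proof of Theorem 1.6 of Levin--Lewis: Theorem 4.3 gives $\edim\tilde{X}\leq K*L\simeq\Sigma(K\bigwedge L)$, and one then desuspends using finite-dimensionality. The ``suspension theorem'' you invoke as a black box is exactly the step the paper singles out, namely the cohomological extension criterion, where Dranishnikov's compact-case result must be replaced by Dydak's analogues for general metrizable spaces.
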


\begin{proof}
We can repeat the proof of Theorem 1.6 from \cite{ll}. The only
difference is that we apply now Theorem 4.3 instead of \cite[Theorem
1.5]{ll} and the application of the extensivity criterion of
Dranishnikov \cite{dr1} is replaced by similar results of Dydak
\cite{dy1} concerning general metric spaces.
\end{proof}



\end{document}